\newtheorem{thm}{Theorem}[section]
\newtheorem{lem}[thm]{Lemma}
\newtheorem{definition}[thm]{Definition}
\newtheorem{proposition}[thm]{Proposition}
\newtheorem{remark}[thm]{Remark}
\newtheorem{algo}[thm]{Algorithm}
\newtheorem{corollary}[thm]{Corollary}
\def\SG{{\mathfrak S}}  %
\def\Ig{{\bf I}}        %
\def\Jg{{\bf J}}        %
\def\Hg{{\bf H}}        %
\def\ie{{\emph i.e.}}   %
\def\ssig{{\bm \sigma}} %
\def\ttau{{\bm \tau}}   %
\def\etiq{\operatorname{label}}
\def\tw{\operatorname{tw}}
\def\invol{\iota}
\def\LagInvol{\operatorname{i}}
\def\Prob{\operatorname{Prob}} %
\def\Des{\operatorname{Des}}   %
\def\Seg{\operatorname{Seg}}   %
\def\Sign{\operatorname{Sign}} %
\def\GDes{\operatorname{GDes}} %
\def\GC{\operatorname{GC}}     %
\def\ade{{\rm ADE}}            %
\def\312{31\!-\!2}
\def\graybullet{{\color{gray!50}\bullet}}
\title[Combinatorics of the 2-species exclusion processes]{Combinatorics of the 2-species exclusion processes,
  marked Laguerre histories, and partially signed permutations}
\author[Corteel]{Sylvie Corteel}
\address{Department of Mathematics, UC Berkeley, USA}
\email{corteel@berkeley.edu}
\author[Nunge]{Arthur Nunge}
\address{Department of Mathematics, Wien Universit\"at, Austria}
\email{arthur.nunge@univie.ac.at}
\keywords{Exclusion processes, bijections, matrix Ansatz, noncommutative symmetric functions,
signed permutations, algebras}
\begin{document}

\maketitle

\begin{abstract}
   Starting from the two-species partially asymmetric simple exclusion
process, we study
   a subclass of signed permutations, the partially signed permutations,
   using the combinatorics of Laguerre histories. From this physical
   and bijective point of view, we obtain a natural  descent statistic on
partially signed permutations;
   as well as partially signed permutations patterns.
\end{abstract}

\section{Introduction}

The two-species partially asymmetric simple exclusion process
(2-PASEP) is a Markov chain with two types of particles ($\bullet$ and
$\graybullet$) and holes ($\circ$).  Particles can hop to the right
and to the left and particles of type $\bullet$ can enter and exit the
system.  If there are no particles of type $\graybullet$, we recover
the classical PASEP. See Section~\ref{secPASEP} and~\cite{uchiyama}
for a detailed definition.  The classical PASEP has given rise to
beautiful combinatorics related to Laguerre histories \cite{JV},
permutations \cite{Corteel,JV,SW}, permutation tableaux
\cite{CW0,CW1,SW}, alternative tableaux \cite{Nadeau} and staircase
tableaux \cite{CW2} in its most general case.  All these objects are
shown to be connected to the PASEP thanks to the Matrix Ansatz \cite{DEHP} and the fact
that the partition function of the model is related to the moments of
the Askey Wilson polynomials \cite{USW}.
In special cases, we can define a Markov chain on the permutations
or the tableaux that projects to the ASEP chain. See \cite{CW1}
for example. This is classical in Markov chain theory and is called lumpability
\cite{bookmarkov,lump}.

In the case of the 2-PASEP the Matrix Ansatz extends naturally
\cite{uchiyama}. We shall detail this in Section~\ref{secPASEP}.  The
partition function is now related to the mixed moments of the Askey
Wilson polynomials \cite{CW}.  It is therefore expected (but not at
all trivial) to see that the combinatorics extends to this generalized
model.  The alternative tableaux become rhombic alternative tableaux
(RATS) \cite{MV} and the permutations become assembl\'ees of
permutations \cite{MV1}. In the most general case we get rhombic
staircase tableaux \cite{CMW}. 

In this paper we take a slightly different approach, as some of the
statistics coming from the 2-PASEP are not natural on the assembl\'ees
of permutations. We generalize the results of \cite{Corteel,JV}
related to Laguerre histories and permutations.  In these cases the
states of the PASEP are in bijection with compositions and the
statistic coming from the PASEP is in bijection with the weight of the
paths or equivalently the number of the generalized patterns $(31-2)$
of the permutation.  Generalized patterns were first introduced
in a general framework by Babson and Steingrímsson in \cite{[1]}, but
some instances had been treated previously in various contexts. For
example, the pattern $(31-2)$ is implicit in \cite{[10]} and in
\cite{[17]}.
We define some generalization of Laguerre histories~:
the {\em marked Laguerre histories}.  We give a bijection between the
{marked Laguerre histories} and a subclass of signed permutations
where we do not put a sign on 1.  We call them {\em partially signed
permutations}.  For example, the partially signed permutations of size
2 are $12$, $21$, $1\overline{2}$ and $\overline{2}1$.  The states of
the 2-PASEP are in bijection with {\em segmented compositions} and the
statistic coming from the 2-PASEP is in bijection with the weight of
the paths or equivalently the number of some generalized patterns of
the partially signed permutation.

All the detailed definitions are given in Section~\ref{defi}. In
Section~\ref{secPASEP}, we give a new solution for the interpretation
of the probabilities of the 2-PASEP. In Section~\ref{sec:paths}, we
use this solution to interpret these probabilities using marked
Laguerre histories. In Section~\ref{sec:perm}, we use the
Françon-Viennot bijection to obtain an interpretation in terms of
generalized permutations. In Section~\ref{sectionLargeLaguerre}, we
give another interpretation in terms of large Laguerre histories and
give an explicit involution on these large Laguerre histories that
explains the so-called particle hole symmetry of the process.  We end
the paper with some final comments and open problems in
Section~\ref{conclusion}. 

{\bf Acknowledgements.} The authors would like to thank Matthieu Josuat-Verg\`es and Lenny Tevlin for
helpful discussions during the elaboration of this work. They would also thank the anynymous referee
for his contructive comments. Finally they thank IRIF, CNRS and Universit\'e de Paris where this work
was elaborated.

\section{Notations and definitions}
\label{defi}

\subsection{Signed permutations and segmented compositions}

A \emph{signed permutation} $\ssig$ of size $n$ is a permutation of
$n$ such that each value has a sign plus or minus. We denote by $B_n$
the set of signed permutations of size $n$.  We overline negative
values and we say that $\overline{k} \in \ssig$ if the value $k$ has a
negative sign in $\ssig$. For example, $\ssig =
\overline{2}57836\overline{4}1$ is a signed permutation of size $8$.

When we compare two values $\ssig_i$ and $\ssig_j$ of a signed
permutation $\ssig$, we use the order $\overline{1} < 1 < \overline{2}
< 2 < \cdots$.

\begin{definition}
  A \emph{partially signed permutation} is a signed permutation
  where~$1$ is not signed. We denote by~$B_n'$ the set of these
  permutations.

  The \emph{overlined values} of a partially signed permutation are
  its negative values. We denote by~$\Sign(\ssig)$ the set of all
  overlined values of a partially signed permutation~$\ssig$.
\end{definition}
For example, $\ssig=\overline{2}57836\overline{4}1$ is an element of
$B_n'$ and its set of overlined values is
$\Sign(\overline{2}57836\overline{4}1)=\{2,4\}$.

We define generalized patterns~\cite{[1]} for partially signed
permutations.
\begin{definition}
  \label{def:312}
  A $\312$ pattern of a partially signed permutation $\ssig$ is
  a pair $(\sigma_i\sigma_{i+1},\sigma_j)$ such that $j > i+1$ and
  $\ssig_{i} > \ssig_j > \ssig_{i+1}$. We denote this pattern by
  $\ssig_i\ssig_{i+1}\!-\!\ssig_j$.
  
  A $(31,\overline{2})$ pattern is a pair
  $(\sigma_i\sigma_{i+1},\overline{k})$ such that $\overline{k} \in
  \ssig$ and $\ssig_{i} \geq \overline{k}> \ssig_{i+1}$.

  We denote by $\tw(\ssig)$ the number of $\312$ patterns of $\ssig$
  plus its number of $(31,\overline{2})$ patterns
\end{definition}
Note that in the second case the value $\overline{k}$ can be to the
left or to the right of $\sigma_i$. For example, the $\312$ patterns
of $\ssig = \overline{2}57836\overline{4}1$ are $83\!-\!6$ and
$83\!-\!\overline{4}$ and the $(31,\overline{2})$ patterns of $\ssig$
are $(83, \overline{4})$, $(\overline{4}1,\overline{2})$, and
$(\overline{4}1, \overline{4})$.

\begin{definition}
  A \emph{segmented composition} of an integer $n$ is a finite
  sequence~$\Ig$ of~$\ell$ positive integers $(i_1,\ldots ,i_\ell)$
  that sum to~$n$ separated by vertical bars or commas.
  
  The \emph{descent set} of $\Ig$ (denoted by $\Des(\Ig)$) is the set of
  values $i_1+i_2+\dots+i_{k}$ where $i_k$ is
  followed by a comma in $\Ig$. Similarly, the \emph{segmentation set}
  of $\Ig$ (denoted by $\Seg(\Ig)$) is the set of values
  $i_1+i_2+\dots+i_k$ such that $i_k$ is followed by a bar in~$\Ig$.
\end{definition}
For example, 
\begin{equation}
  (\Des(1|2|1,2,2),\Seg(1|2|1,2,2))=(\{4, 6\},\{1,3\}).
\end{equation}
The \emph{ADE-word} associated with $\Ig$ is the word $w$ of
size $n-1$ such that
\begin{equation}
  \begin{array}{rcl}
    i \in \Des(\Ig) & \Rightarrow & w_i = E; \\
    i \in \Seg(\Ig) & \Rightarrow & w_i = A; \\
    i \notin \Des(\Ig), i \notin \Seg(\Ig) & \Rightarrow & w_i = D.
  \end{array}
  \label{ade}
\end{equation}
We denote this word by $\ade(\Ig)$. For example,
$\ade(1|2|1,2,2) = ADAEDED$.

\begin{definition}
  The \emph{Genocchi descent set} of a partially signed
  permutation $\ssig$ of size $n$ is the set of positive values
  followed by a smaller value. In other words,
  \begin{equation}
    \GDes(\ssig) := \{i~|~i\notin\Sign(\ssig),~\ssig_j = i \Rightarrow
    \ssig_j > \ssig_{j+1}\} 
  \end{equation}
  The \emph{Genocchi composition of descents} of a partially
  signed permutation (denoted by $\GC(\ssig)$) is the
  segmented composition $\Ig$ whose descent set is
  $\{d-1~|~d\in\GDes(\ssig)\}$ and whose segmentation set is
  $\{s-1~|~s\in\Sign(\ssig)\}$.
\end{definition}
Note that if $\ssig$ does not have any overlined values, the statistic
$\GC$ is the composition of the values of descents minus one and is
the same as the one defined in~\cite{HNTT}. For example, if $\ssig =
\overline{2}57836\overline{4}1$, we have $\GDes(\ssig)=\{6,8\}$,
$\Sign(\ssig)=\{2,4\}$ so we have $\GC(\ssig)=(1|2|2,2,1)$.

\subsection{Laguerre histories}

Recall that a Motzkin path of size $n$ is a
path going from $(0,0)$ to $(n,0)$ using increasing steps, decreasing
steps, and horizontal steps that never goes below the horizontal
axis. For any path $P$, we denote by $P_i$ the $i$'th step of $P$. We
call the starting (resp. ending) height of a step, the distance
between the beginning (resp. end) of this step and the horizontal
axis. Unless otherwise specified, the height is understood to be the
starting height.

A Laguerre history is a weighted object introduced by Viennot
in~\cite{Vi}, see also~\cite{dMV}. The Laguerre histories are in
bijection with permutations through the Fran\c con-Viennot
bijection~\cite{FV}. These objects have been used to study some
properties of the ASEP~\cite{JV}.

\begin{definition}
  A \emph{Laguerre history} $H$ of size $n$ is a weighted Motzkin path
  of size $n$ with two different horizontal steps such that
  \begin{itemize}
  \item a $\nearrow$ or $\longrightarrow$ step starting from height
    $h$ has a weight between 0 and $h$;
  \item a $\searrow$ or $\dashrightarrow$ step starting from height
    $h$ has a weight between 0 and $h-1$.
  \end{itemize}
  We denote by ${\rm tw}(H)$ the total sum of the weights of $H$.

  If $w_i$ is the weight of the $i\!$'th step of a Laguerre history $L$
  of size $n$, we call $w=w_1\dots w_n$ the \emph{weight} of $L$.
\end{definition}

An example of a Laguerre history of size $8$ is given in
Figure~\ref{fig:lagHist}. To avoid cumbersome figures, we only
represent the non-zero weights of the steps on the figures.

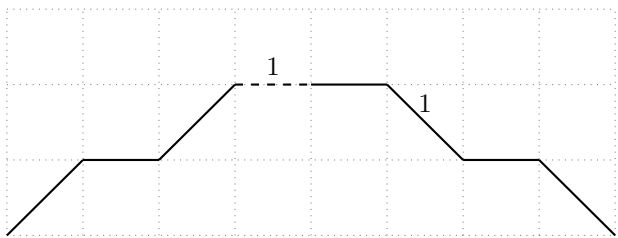
\begin{figure}[ht]
  \begin{center}
    \begin{tikzpicture}
  \draw [thin, gray, dotted] (0, 0) grid (8, 3);
  \draw [thick] (0, 0) -- (1, 1) node[midway,scale=1,above]{};
  \draw [thick] (1, 1) -- (2, 1) node[midway,scale=1,above]{};
  \draw [thick] (2, 1) -- (3, 2) node[midway,scale=1,above]{};
  \draw [thick, dashed] (3, 2) -- (4, 2)node[midway,scale=1,above]{$1$};
  \draw [thick] (4, 2) -- (5, 2) node[midway,scale=1,above]{};
  \draw [thick] (5, 2) -- (6, 1) node[midway,scale=1,above]{$1$};
  \draw [thick] (6, 1) -- (7, 1) node[midway,scale=1,above]{};
  \draw [thick] (7, 1) -- (8, 0) node[midway,scale=1,above]{};
\end{tikzpicture}
    \caption{An example of Laguerre history of size $8$ and weight
      $00010100$.}
    \label{fig:lagHist}
  \end{center}
\end{figure}

The Fran\c con-Viennot bijection~\cite{FV} is a bijection between
Laguerre histories of size $n$ and permutations $\sigma$ in $\SG_n$.
We denote  this map by $\psi_{FV}$.
We compare each value of the permutation $\sigma$ with its two neighbors.
We use the convention $\sigma_0 = 0$ and $\sigma_{n+1} = n+1$.

\begin{algo}\cite{FV}
  \label{algoFV}
  \begin{itemize}[before = \leavevmode, itemsep=0pt]
  \item Input: A permutation~$\sigma\in \SG_n$.
  \item Output: A Laguerre history $H$ of size~$n$.
  \item Execution: For $k$ from 1 to $n$, let $j$ be such
    that $\sigma_j=k$. The $k\!$'th step of~$H$~is
    \begin{itemize}
    \item $H_{k} =~ \nearrow$  if $\sigma_j$ is a \emph{valley},
      \ie, $\ssig_{j-1} > \sigma_j < \sigma_{j+1}$,
    \item $H_{k} =~ \searrow$ if $\sigma_j$ is a \emph{peak},
      \ie, $\ssig_{j-1} < \sigma_j > \sigma_{j+1}$,
    \item $H_{k} =~ \longrightarrow$  if $\sigma_j$ is a \emph{double rise},
      \ie, $\ssig_{j-1} < \sigma_j < \sigma_{j+1}$,
    \item $H_{k} =~ \dashrightarrow$ if $\sigma_j$ is a \emph{double descent},
      \ie, $\ssig_{j-1} > \sigma_j > \sigma_{j+1}$.
    \end{itemize}
    The weight of $k\!$'th step of $H$ is equal to the number of $\312$
    patterns such that $k$ is the number corresponding to~$2$
    in~$\sigma$.
  \end{itemize}
\end{algo}

The Laguerre history in Figure~\ref{fig:lagHist} is the image of the
permutation~$\sigma=25783641$. Indeed, the valleys of $\sigma$ are $3$
and $1$; its peaks are $8$ and $6$; its double rises are~$2$,~$5$,
and~$7$; and its only double descent is~$4$. Finally, its $\312$
patterns are $83\!-\!4$ and~$83\!-\!6$.

We shall also need the reciprocal map of the Fran\c con-Viennot
bijection described by the following algorithm.

\begin{algo}
  \label{algoInverseFV}
  \begin{itemize}[before = \leavevmode, itemsep=0pt]
  \item Input: A Laguerre history $H$ of size~$n$
  \item Output: A permutation $\sigma$ of size~$n$.
  \item Initialization: $\sigma = \circ$;
  \item Execution:
    Let $w$ be the weight of $H$. For $k\in\{1,\dots, n\}$, replace the
    $(w_k+1)$-st $\circ$ of $\sigma$ by:
    \begin{itemize}
    \item $\circ k \circ$ if $H_k = \nearrow$;
    \item $k \circ$ if $H_k = \longrightarrow$;
    \item $\circ k$ if $H_k = \dashrightarrow$;
    \item $ k$ if $H_k = \searrow$.
    \end{itemize}
    The final permutation is obtained by removing the last $\circ$.
  \end{itemize}
\end{algo}

For example, see~\eqref{exFVrecip} for a step by step
execution of Algorithm~\ref{algoInverseFV} on the Laguerre history of
Figure~\ref{fig:lagHist}.

\begin{equation}
  \label{exFVrecip}
  \begin{array}{c}
    \sigma = \circ ~\to~ \circ\, 1\, \circ ~\to~
    2 \circ 1\, \circ ~\to~
    2 \circ 3 \circ 1 ~\to~
    2 \circ 3 \circ 41\, \circ\vspace{0.2cm}\\ \to~
    25 \circ 3 \circ 41\, \circ ~\to~
    25 \circ 3641\, \circ ~\to~
    257 \circ 3641\, \circ 
    \vspace{0.2cm}\\ \to~25783641\, \circ ~\to~
    25783641
  \end{array}
\end{equation}

\section{The 2-PASEP and the Matrix Ansatz}
\label{secPASEP}

The two-species PASEP is a Markov chain whose states are words of
length $N$ in the letters $\{\circ, \bullet, \graybullet\}$. This was
first studied in a 
more general setting in~\cite{uchiyama} and then combinatorialy
in~\cite{CMW,CW,MV,MV1}. This Markov chain is described the following
way:

\begin{definition}
Let $q$ be a constant such that $0 \leq q \leq 1$. The 2-PASEP is the
Markov chain on the words in the letters $\circ, \bullet, \graybullet$
with transition probabilities:
\begin{itemize}
\item  If $x = A\bullet \circ B$ and
$y = A \circ \bullet B$ then
$P_{x,y} = \frac{1}{N+1}$ (black particle hops right) and
$P_{y,x} = \frac{q}{N+1}$ (black particle hops left).
\item  If $x = A\graybullet \circ B$ and
$y = A \circ \graybullet B$ then
$P_{x,y} = \frac{1}{N+1}$ (gray particle hops right) and
$P_{y,x} = \frac{q}{N+1}$ (gray particle hops left).
\item  If $x = A\bullet\graybullet B$ and
$y = A \graybullet \bullet B$ then
$P_{x,y} = \frac{1}{N+1}$ (black particle hops right) and
$P_{y,x} = \frac{q}{N+1}$ (black particle hops left).
\item  If $x = \circ B$ and $y = \bullet B$
then $P_{x,y} = \frac{1}{N+1}$ (particle enters from left).
\item  If $x = B \bullet$ and $y = B \circ$
then $P_{x,y} = \frac{1}{N+1}$ (particle exits to the right).
\item  Otherwise $P_{x,y} = 0$ for $y \neq x$
and $P_{x,x} = 1 - \sum_{x \neq y} P_{x,y}$.
\end{itemize}
\end{definition}

An example of a chain on three letters among which two are $\graybullet$
is given on Figure~\ref{chain} where we represent the transitions
$(N+1)P_{X,Y}$ for $X \ne Y$ and $P_{X,Y}\ne 0$.

\begin{figure}[ht]
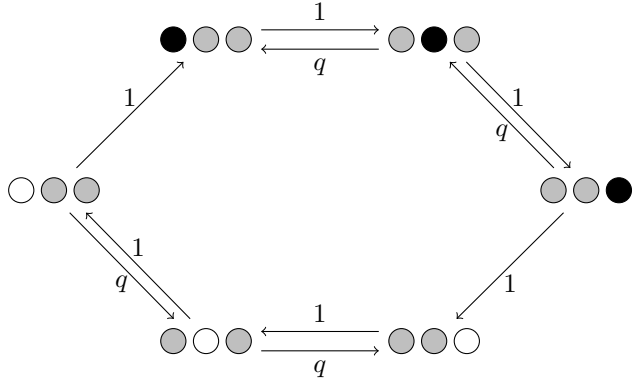

  \begin{center}
    \begin{tikzpicture}
  \node (EAA) at (0,0) {\input{figures/stateEAA.tex}};
  \node (DAA) at (2,2){\input{figures/stateDAA.tex}};
  \node (ADA) at (5,2){\input{figures/stateADA.tex}}; 
  \node (AAD) at (7,0){\input{figures/stateAAD.tex}};
  \node (AEA) at (2,-2){\input{figures/stateAEA.tex}}; 
  \node (AAE) at (5,-2){\input{figures/stateAAE.tex}};

  \draw[->] (EAA) -- (DAA) node[midway,above] {$1$};
  \draw[->] (AAD) -- (AAE) node[midway,below] {$1$};

  \draw[->] (DAA.10) -- (ADA.170) node[midway,above] {$1$};
  \draw[->] (ADA.-170) -- (DAA.-10) node[midway,below] {$q$};

  \draw[->] (AAE.170) -- (AEA.10) node[midway,above] {$1$};
  \draw[->] (AEA.-10) -- (AAE.-170) node[midway,below] {$q$};
  
  \draw[->] (ADA.-35) -- (AAD.125) node[midway,above] {$1$};
  \draw[->] (AAD.145) -- (ADA.-55) node[midway,below] {$q$};
  
  \draw[->] (AEA.125) -- (EAA.-35) node[midway,above] {$1$};
  \draw[->] (EAA.-55) -- (AEA.145) node[midway,below] {$q$};

\end{tikzpicture}
    \caption{A Markov chain for $N=3$ and $r=2$.}
    \label{chain}
  \end{center}
\end{figure}

\begin{remark}
  Given two states of the 2-PASEP $x$ and $y$, using the different
  transitions one can see that the transition from $x$ to $y$ is the
  same as the one from $\invol(x)$ to $\invol(y)$ where $\invol$ is
  the map that reverse a state and replace the $\circ$ spots by
  $\bullet$ and conversely. This is called the particle hole symmetry
  of the process.
\end{remark}
For example, $P_{\bullet\graybullet\graybullet,
  \graybullet\bullet\graybullet}$ is equal to
$P_{\graybullet\graybullet\circ, \graybullet\circ\graybullet}$.

To each state $x$ of the 2-PASEP with $N$ sites we associate a word $X(x)$
in $\{A,D,E\}^N$ using the following map:
$$ \circ\mapsto E;~ \bullet\mapsto D;~ \graybullet\mapsto A. $$

We define an involution $\invol$ on words in $\{A,D,E\}^N$ that
corresponds to the particle hole symmetry.

\begin{definition}
  \label{def:invol}
  Let $X\in\{A,D,E\}^N$, define $\invol(X)$ as the word obtained after
  reversing $X$ and replacing $D$ by $E$ and conversely.
\end{definition}

Uchiyama~\cite{uchiyama} proved that we can use a Matrix Ansatz in
order to compute the stationary distribution of the states of the
2-PASEP. We denote by $\Prob(x)$ the stationary distribution of a
state $x$.

\begin{proposition}\cite{uchiyama}
  Let $D,A,E$ be infinite matrices. Let $\bra{W}$  (resp. $\ket{V}$)
  be an infinite row (resp. column) 
  vector satisfying the Ansatz:
  \begin{eqnarray}
    DE&=&qED+D+E; \label{ansatzDE}\\ 
    DA&=&qAD+A; \label{ansatzDA}\\
    AE&=&qEA+A; \label{ansatzAE}\\
    \bra{W}E&=&\bra{W} \label{ansatzAlpha}; \\
    D\ket{V}&=&\ket{V} \label{ansatzBeta}.
  \end{eqnarray}
  Then the probability to be in a state $x$  in $\{\circ, \bullet, \graybullet\}^N$ with $r$ letters $A$ is:
  \begin{equation}
    \Prob(x) = \frac{\bra{W}X(x)\ket{V}}{[y^r]\bra{W}(D+yA+E)^N\ket{V}}.
    \label{stationProbaMat}
  \end{equation}
  where $[y^r]$ means that we consider the coefficient of the monomial $y^r$.
  \label{uchi}
\end{proposition}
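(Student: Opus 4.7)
The plan is to verify that the unnormalized weights $f(x) := \bra{W} X(x) \ket{V}$ satisfy the stationarity condition for the 2-PASEP; once this is done, normalization by the denominator in \eqref{stationProbaMat} yields the probabilities. By the definition of $X$, the sum of $f(x)$ over all states $x$ with exactly $r$ letters $A$ is precisely $[y^r] \bra{W} (D + yA + E)^N \ket{V}$, so this normalization turns $f$ into a probability distribution as soon as stationarity is established.

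To check stationarity $\sum_{x' \ne x} \bigl( f(x') P_{x', x} - f(x) P_{x, x'} \bigr) = 0$, I would follow the telescoping argument of Derrida--Evans--Hakim--Pasquier, adapted to two species. The key ingredient is a set of auxiliary matrices $\hat{D}, \hat{A}, \hat{E}$ such that the three bulk ansatz relations \eqref{ansatzDE}, \eqref{ansatzDA}, \eqref{ansatzAE} can be recast in the telescoping forms
\begin{align*}
  D \hat{E} - \hat{D} E &= D + E,\\
  D \hat{A} - \hat{D} A &= A,\\
  \hat{A} E - A \hat{E} &= A,
\end{align*}
together with the boundary conditions $\bra{W} \hat{D} = 0$ and $\hat{E} \ket{V} = 0$. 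Each such identity precisely matches the local gain from a forward hop (at rate $1/(N+1)$) against the loss from the corresponding backward hop (at rate $q/(N+1)$) across a single bond.

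For a state $x$ with $X(x) = X_1 \cdots X_N$, I would then split the net flux into one local contribution per bond. The bulk contribution at bond $(i, i+1)$ equals
$$\bra{W} X_1 \cdots X_{i-1} \bigl( X_i \hat{X}_{i+1} - \hat{X}_i X_{i+1} \bigr) X_{i+2} \cdots X_N \ket{V},$$
so summing over $i = 1, \dots, N-1$ gives a telescoping sum whose interior terms cancel pairwise, leaving only the contributions at $i = 0$ and $i = N$. These remaining terms correspond to entry and exit of black particles at the left and right ends; they vanish by $\bra{W} \hat{D} = 0$ and $\hat{E} \ket{V} = 0$, combined with the boundary relations $\bra{W} E = \bra{W}$ and $D \ket{V} = \ket{V}$ from \eqref{ansatzAlpha}--\eqref{ansatzBeta}.

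The main obstacle is the explicit construction of the auxiliary matrices $\hat{D}, \hat{A}, \hat{E}$ compatible with all three bulk relations simultaneously: while in the one-species DEHP case one can choose $\hat{D}, \hat{E}$ to be scalar multiples of a simple boundary projector, the presence of the letter $A$ and the extra relations \eqref{ansatzDA}, \eqref{ansatzAE} force a more delicate choice. Once such hat-matrices are exhibited (as in Uchiyama's work), the telescoping argument above reduces to routine algebraic bookkeeping, and \eqref{stationProbaMat} follows.
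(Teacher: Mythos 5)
The paper never proves Proposition~\ref{uchi}: it is quoted from Uchiyama, and the paper's own contribution is the explicit solution exhibited in the lemma that follows. So your proposal has to stand on its own. Its overall strategy is the right one (and the standard one): show that the unnormalized weights $f(x)=\bra{W}X(x)\ket{V}$ satisfy the stationary master equation via a bond-by-bond telescoping with auxiliary matrices, then normalize within the sector of fixed $r$ (the gray-particle number is conserved and the sector chain is irreducible, so the stationary law there is unique). But the telescoping system you write down does not close. Writing the bond-$i$ contribution as $\bra{W}X_1\cdots X_{i-1}(\hat X_i X_{i+1}-X_i\hat X_{i+1})X_{i+2}\cdots X_N\ket{V}$ --- the convention under which your first two bulk identities are the correct requirements, since $qED-DE=-(D+E)$ and $qAD-DA=-A$ --- the third must read $A\hat E-\hat A E=A$, not $\hat A E-A\hat E=A$; no common convention makes all three of your identities correct, and you also omit the conditions for the reversed pairs $ED$, $AD$, $EA$ and for equal-letter bonds. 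More seriously, your boundary conditions $\bra{W}\hat D=0$ and $\hat E\ket{V}=0$ are incompatible with the scheme: after telescoping, the leftover terms $\bra{W}\hat X_1X_2\cdots X_N\ket{V}-\bra{W}X_1\cdots X_{N-1}\hat X_N\ket{V}$ must cancel the entry/exit terms of the master equation, which in this scheme amounts to $\bra{W}\hat E=\bra{W}$, $\bra{W}\hat D=-\bra{W}$, $\bra{W}\hat A=0$, $\hat E\ket{V}=\ket{V}$, $\hat D\ket{V}=-\ket{V}$, $\hat A\ket{V}=0$ (using \eqref{ansatzAlpha}--\eqref{ansatzBeta}); with $\bra{W}\hat D=0$ nothing is left to cancel the gain term $\bra{W}EX_2\cdots X_N\ket{V}$ produced by particle entry when the first site carries a $\bullet$. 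As written, the ``routine bookkeeping'' would therefore not go through.

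Moreover, the step you defer to Uchiyama as the main obstacle is in fact immediate and should not be left as a black box: the scalar choice $\hat D=-1$, $\hat E=1$, $\hat A=0$ (multiples of the identity) satisfies all nine bulk conditions --- e.g.\ $\hat D E-D\hat E=-(D+E)=qED-DE$ by \eqref{ansatzDE}, $\hat D A-D\hat A=-A=qAD-DA$ by \eqref{ansatzDA}, $\hat A E-A\hat E=-A=qEA-AE$ by \eqref{ansatzAE}, the reversed pairs with opposite signs, and equal-letter bonds trivially --- as well as all six boundary conditions above, using only the five Ansatz relations and never the explicit matrices, exactly as the proposition requires. With these hats your telescoping argument closes. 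Two small points would then complete the statement: the identification of the denominator as the sum of $f$ over the $r$-sector (which you made), and the remark that $f$ is not identically zero on that sector (guaranteed once some nonnegative solution of the Ansatz with nonzero partition function is known, e.g.\ the matrices of the following lemma), so that uniqueness of the stationary distribution of the irreducible sector chain yields \eqref{stationProbaMat}.
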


\begin{remark}
  The Ansatz in~\cite{uchiyama} is more general, corresponding to the
  2-ASEP where particles may enter and exit the chain from the right
  and the left with different rates.
\end{remark}

We give here a new solution of this system using the following
matrices.

\begin{eqnarray}
  \label{matrixD}
  D&=&\begin{pmatrix}
    1  & 1 & 0 & 0&\dots \\
    0  & [2]_q & [2]_q & 0& \dots\\ 
    0  & 0 & [3]_q & [3]_q &  \dots\\ 
    0  & 0 &  0 & [4]_q & \dots\\ 
    \vdots & \vdots & \vdots & \vdots & 
  \end{pmatrix}; \\
  \label{matrixE}
  E&=&\begin{pmatrix}
    0  & 0     &  0    & 0    &   \dots\\
    1  & 1     & 0     & 0    & \dots \\
    0  & [2]_q & [2]_q & 0     & \dots\\ 
    0  & 0     & [3]_q & [3]_q &  \dots\\  
    \vdots & \vdots & \vdots & \vdots & 
  \end{pmatrix}; \\
  \label{matrixA}
  A&=&\begin{pmatrix}
  1  & 0 & 0   & 0   &   \ldots\\
  0  & q & 0   & 0   & \ldots \\
  0  & 0 & q^2 & 0   & \ldots\\ 
  0  & 0 & 0   & q^3 &  \ldots\\  
  \vdots & \vdots & \vdots & \vdots & 
  \end{pmatrix}(D+E);\\
  \label{vectorW}
  \bra{W}&=&(1,1,0,0,\dots); \\
  \label{vectorV}
  \ket{V}&=&\begin{pmatrix}
    1 \\
    0 \\
    0 \\
    \vdots  
  \end{pmatrix}.
\end{eqnarray}

\begin{lem}
  The previous matrices and vectors satisfy the equations from
  Proposition~\ref{uchi}.
\end{lem}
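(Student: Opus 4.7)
The plan is to verify each of the five identities by direct matrix computation, exploiting the bidiagonal structure of $D$ and $E$. The two vector identities are immediate: the first two rows of $E$ are $(0,0,0,\ldots)$ and $(1,1,0,0,\ldots)$, so $\bra{W}E$ reproduces $\bra{W}$; dually, $D\ket{V}$ extracts the first column of $D$, which equals $(1,0,0,\ldots)^{T} = \ket{V}$.

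For $DE = qED + D + E$, both products are tridiagonal, with $D_{i,i} = D_{i,i+1} = [i+1]_q$ and $E_{i,i-1} = E_{i,i} = [i]_q$ for $i \ge 1$ (and $E_{0,\cdot} = 0$). I would compare the sub-, main-, and super-diagonal entries of both sides. The off-diagonal checks collapse at once via the $q$-integer identity $[i+1]_q = 1 + q[i]_q$, and the main-diagonal comparison is a slightly longer but routine manipulation with the same identity.

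For $DA = qAD + A$ and $AE = qEA + A$, I would write $A = Q(D+E)$ with $Q = \operatorname{diag}(1,q,q^2,\ldots)$ and combine the already-established $DE$-relation with the row-shift observation
\[
E_{i,j} = D_{i-1,j} \qquad (i \ge 1),
\]
which is visible on inspection since both matrices share the same bidiagonal pattern, offset by one row. Expanding
\[
DA - qAD \;=\; (DQ - qQD)(D+E) + qQ\,[D,E]
\]
and substituting $[D,E] = (q-1)ED + D + E$ from the $DE$-relation, the whole expression reduces to the single identity $(ED)_{i,j} = [i]_q(D+E)_{i,j}$, which is precisely the row-shift identity summed over the two non-zero columns of $E_{i,\cdot}$. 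The manipulation for $AE - qEA = A$ is symmetric and again reduces to the same row-shift identity.

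The main obstacle is the non-commutativity of the diagonal matrix $Q$ with $D$ and $E$: the commutators $DQ - qQD$ and $QE - qEQ$ do not vanish (although each turns out to be diagonal), and one has to verify that the resulting spurious contributions cancel exactly against the extra terms generated by the $DE$-relation. Once the row-shift identity is in hand this cancellation is algebraically clean, so the real content of the lemma lies in that one structural observation.
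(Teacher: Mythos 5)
Your proposal is correct, and for the hard part of the lemma it takes a genuinely different route from the paper. The paper proves all three matrix relations by brute force: it writes out the nonzero diagonals of $DE$, $ED$, $DA$, $AD$, $AE$, $EA$ (the products involving $A$ have four nonzero diagonals each) and checks the Ansatz equations diagonal by diagonal using $[i]_q = 1 + q[i-1]_q$; you do the same only for $DE=qED+D+E$. For \eqref{ansatzDA} and \eqref{ansatzAE} you instead exploit structure: the factorization $A = Q(D+E)$ with $Q=\operatorname{diag}(1,q,q^2,\dots)$, the row-shift identity $E_{i,j}=D_{i-1,j}$ (equivalently, $ED$ is a diagonal matrix times $D+E$ -- and in fact so is $DE$, which would let you dispatch the first relation the same way), and the observation that the commutators $DQ-qQD$ and $QE-qEQ$ are diagonal. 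I checked that your decomposition $DA-qAD=(DQ-qQD)(D+E)+qQ[D,E]$ is an identity and that, after inserting $[D,E]=(q-1)ED+D+E$ and $ED=\operatorname{diag}([i]_q)(D+E)$, the row-$i$ coefficients collapse via $[i+1]_q-q[i]_q=1$ to give exactly $Q(D+E)=A$, and similarly for $AE-qEA$; the vector identities are handled as in the paper. What your approach buys is a shorter, more conceptual verification that never expands the four-diagonal products and isolates the one structural fact (the row shift between $E$ and $D$) driving the computation; what the paper's approach buys is a purely mechanical check with no bookkeeping about non-commuting diagonal factors. Your write-up is a plan rather than a full verification (you do not record the diagonal entries $(1-q)q^i[i+1]_q$ of $DQ-qQD$, etc.), so to make it a complete proof you should carry out those two short diagonal computations explicitly, but there is no gap in the method.
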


\begin{proof}
  Let $M=DE$. For all $i$, $j$, we have
  \begin{eqnarray*}
    M_{ij}&=& \sum_{k=1}^\infty D_{ik}E_{kj} \\
         &=& \sum_{k=i}^{i+1}[i]_qE_{kj}.
  \end{eqnarray*}
  Hence, $M$ is described as follow :
  \begin{equation}
    \label{DE}
    \left\{\begin{array}{rcccl}
    M_{i,i-1} &=& [i]_qE_{i,i-1} &=& [i]_q[i\!-\!1]_q;\\
    M_{i,i}   &=& [i]_q\Big(E_{i,i}+E_{i+1,i}\Big) &=&
    [i]_q\Big([i\!-\!1]_q+[i]_q\Big);\\
    M_{i,i+1} &=& [i]_qE_{i+1,i+1} &=& [i]_q^2;
    \end{array}\right.
  \end{equation}
  and $M_{i,j}=0$ if $|i-j|>1$.
  Let $N=ED$. We have
  \begin{equation*}
    \left\{\begin{array}{rcl}
    N_{i,i-1} &=& [i\!-\!1]_q^2;\\[0.15cm]
    N_{i,i}   &=& [i\!-\!1]_q\Big([i\!-\!1]_q+[i]_q\Big);\\[0.2cm]
    N_{i,i+1} &=& [i\!-\!1]_q[i]_q;
    \end{array}\right.
  \end{equation*}
  and $N_{i,j}=0$ if $|i-j|>1$.
  For all $i$, we have
  \begin{equation*}
    \begin{array}{rcccl}
      qN_{i,i-1}+D_{i,i-1}+E_{i,i-1}&=&[i\!-\!1]_q(q[i\!-\!1]_q+1)&=&[i\!-\!1]_q[i]_q;\\[0.1cm]
      qN_{i,i}+D_{i,i}+E_{i,i}&=&(q[i\!-\!1]_q+1)([i]_q+[i\!-\!1]_q)&=&
      [i]_q([i\!-\!1]_q+[i]_q);\\[0.1cm]
      qN_{i,i+1}+D_{i,i+1}+E_{i,i+1}&=&(q[i\!-\!1]_q+1)[i]_q&=&[i]_q^2,
    \end{array}
  \end{equation*}
  which is equal to~\eqref{DE} and so~\eqref{ansatzDE} is satisfied.

  For~\eqref{ansatzDA} and~\eqref{ansatzAE}, recall that $A$ is
  described by
  \begin{equation}
    \left\{\begin{array}{rcl}
    A_{i,i-1} &=& q^i[i\!-\!1]_q;\\[0.15cm]
    A_{i,i}   &=& q^i\Big([i\!-\!1]_q+[i]_q\Big);\\[0.2cm]
    A_{i,i+1} &=& q^i[i]_q.
    \end{array}\right.
  \end{equation}
  Hence, we have
  \begin{eqnarray*}
    (DA)_{ij}&=& \sum_{k=1}^\infty D_{ik}A_{kj} \\
         &=& \sum_{k=i}^{i+1}[i]_qA_{kj}.
  \end{eqnarray*}
  We use the following description:
  \begin{equation}
    \label{DA}
    \left\{\begin{array}{rcccl}
    (DA)_{i,i-1} &=& [i]_qA_{i,i-1} &=& q^i[i]_q[i\!-\!1]_q;\\[0.15cm]
    (DA)_{i,i}   &=& [i]_q\Big(A_{i,i}+A_{i+1,i}\Big) &=&
    [i]_q\Big(q^i[i\!-\!1]_q+(q^i+q^{i+1})[i]_q\Big);\\[0.2cm]
    (DA)_{i,i+1} &=& [i]_q\Big(A_{i,i+1}+A_{i+1,i+1}\Big) &=&
    [i]_q\Big((q^i+q^{i+1})[i]_q + q^{i+1}[i\!+\!1]_q\Big); \\[0.2cm]
    (DA)_{i,i+2} &=& [i]_qA_{i+1,i+2} &=& q^{i+1}[i]_q[i\!+\!1]_q.
    \end{array}\right.
  \end{equation}
  The product $AD$ is described using the same idea:
  \begin{equation*}
    \left\{\begin{array}{rcccl}
    (AD)_{i,i-1} &=& A_{i,i-1}D_{i-1,i-1} &=& q^i[i\!-\!1]_q^2;\\[0.15cm]
    (AD)_{i,i}   &=& A_{i,i-1}D_{i-1,i}+A_{i,i}D_{i,i} &=&
    q^i\Big([i\!-\!1]_q^2 + [i\!-\!1]_q[i]_q+[i]_q^2\Big);\\[0.2cm]
    (AD)_{i,i+1} &=& A_{i,i}D_{i,i+1}+A_{i,i+1}D_{i+1,i+1} &=&
    q^i\Big([i\!-\!1]_q[i]_q + [i]_q^2 + [i]_q[i\!+\!1]_q\Big); \\[0.2cm]
    (AD)_{i,i+2} &=& A_{i,i+1}D_{i+1,i+2} &=& q^{i}[i]_q[i\!+\!1]_q.
    \end{array}\right.
\end{equation*}
Hence, for all $i$, we have
  \begin{equation*}
    \begin{array}{rcl}
      q(AD)_{i,i-1}+A_{i,i-1}&=&q^i[i\!-\!1]_q(q[i\!-\!1]_q+1)\\[0.1cm]
      &=&q^i[i\!-\!1]_q[i]_q;\\[0.2cm]
      q(AD)_{i,i}+A_{i,i}&=&q^i(q[i\!-\!1]_q+1)([i]_q
      +[i\!-\!1]_q)+q^{i+1}[i]_q^2 \\[0.1cm]
      &=& q^i[i]_q([i\!-\!1]_q+[i]_q)+q^{i+1}[i]_q^2;\\[0.2cm]
      q(AD)_{i,i+1}+A_{i,i+1}&=&q^i(q[i\!-\!1]_q+1)[i]_q
      +q^{i+1}([i]_q^2+[i]_q[i\!+\!1]_q)\\[0.1cm]
      &=&q^i[i]_q^2+q^{i+1}([i]_q^2+[i]_q[i\!+\!1]_q)~\\[0.2cm]
      q(AD)_{i,i+2}+A_{i,i+2}&=& q^{i+1}[i]_q[i\!+\!1]_q + 0 \\[0.1cm]
      &=& q^{i+1}[i]_q[i\!+\!1]_q,
    \end{array}
  \end{equation*}
  which is equal to~\eqref{DA} and proves that~\eqref{ansatzDA} is
  satisfied.

  Let us now consider the product $AE$. We have the following
  description:
  \begin{equation}
    \label{AE}
    \left\{\begin{array}{rcccl}
    (AE)_{i,i-2} &=& A_{i,i-1}E_{i-1,i-2} &=&
    q^i[i\!-\!1]_q[i-2]_q;\\[0.15cm]
    (AE)_{i,i-1} &=& A_{i,i-1}E_{i-1,i-1}+A_{i,i}E_{i,i-1} &=&
    q^i\Big([i\!-\!1]_q[i-2]_q+[i\!-\!1]_q^2+[i]_q[i\!-\!1]_q \Big);\\[0.15cm]
    (AE)_{i,i} &=& A_{i,i}E_{i,i} + A_{i,i+1}E_{i+1,i} &=&
    q^i\Big([i\!-\!1]_q^2+[i]_q[i\!-\!1]_q+[i]_q^2\Big);\\[0.15cm]
    (AE)_{i,i+1} &=& A_{i,i+1}E_{i+1,i+1} &=&
    q^i[i]_q^2.\\[0.15cm]
    \end{array}\right.
  \end{equation}
  We also have $EA$ described as follow:
  \begin{equation*}
    \left\{\begin{array}{rcccl}
    (EA)_{i,i-2} &=& E_{i,i-1}A_{i-1,i-2} &=&
    q^{i-1}[i\!-\!1]_q[i-2]_q;\\[0.15cm]
    (EA)_{i,i-1} &=& E_{i,i-1}A_{i-1,i-1}+E_{i,i}A_{i,i-1} &=&
    q^{i-1}[i-2]_q[i\!-\!1]_q + q^{i-1}[i\!-\!1]_q^2+q^i[i\!-\!1]_q^2;\\[0.15cm]
    (EA)_{i,i} &=& E_{i,i-1}A_{i-1,i} + E_{i,i}A_{i,i} &=&
    q^{i-1}[i\!-\!1]_q^2+q^i[i\!-\!1]_q^2+q^i[i]_q[i\!-\!1]_q;\\[0.15cm]
    (EA)_{i,i+1} &=& E_{i,i}A_{i,i+1} &=&
    q^i[i\!-\!1]_q[i]_q.\\[0.15cm]
    \end{array}\right.
\end{equation*}
Hence, for all $i$, we have
  \begin{equation*}
    \begin{array}{rcl}
      q(EA)_{i,i-2}+A_{i,i-2}&=&q^i[i\!-\!1]_q[i-2]_q + 0\\[0.1cm]
      &=&q^i[i\!-\!1]_q[i-2]_q;\\[0.2cm]
      q(EA)_{i,i-1}+A_{i,i-1}&=&q^i[i\!-\!1]_q[i-2]_q + q^i[i\!-\!1]_q^2
      +q^i(q[i\!-\!1]_q+1)[i\!-\!1]_q \\[0.1cm]
      &=& q^i[i\!-\!1]_q[i-2]_q + q^i[i\!-\!1]_q^2 + q^i[i]_q[i\!-\!1]_q;\\[0.2cm]
      q(EA)_{i,i+1}+A_{i,i+1}&=& q^i[i\!-\!1]_q^2
      +q^{i}(q[i\!-\!1]_q+1)([i\!-\!1]_q+[i]_q)\\[0.1cm]
      &=&q^i[i\!-\!1]_q^2+q^i[i]_q([i\!-\!1]_q+[i]_q)~\\[0.2cm]
      q(EA)_{i,i+2}+A_{i,i+2}&=& q^i(q[i\!-\!1]_q+1)[i]_q \\[0.1cm]
      &=& q^{i}[i]_q^2,
    \end{array}
  \end{equation*}
  which is equal to~\eqref{AE} and proves that~\eqref{ansatzAE} is
  satisfied. 

  Using the structure of the vectors, one easily proves
  that~\eqref{ansatzAlpha} and~\eqref{ansatzBeta}  are satisfied,
  which ends the proof.
\end{proof}

\begin{remark}
  When $q=1$, $A=D+E$ satisfies \eqref{ansatzDA} and \eqref{ansatzAE}.
  In this case we can compute the stationary distribution of the 2-PASEP
  using the stationary distribution of the PASEP (case with zero
  $\graybullet$ particles).
  Let~$x$ be a state of the $2$-PASEP with~$N$ sites and~$r$ gray
  particles. We have
  \begin{equation}
    \label{proba2PASEP}
    \Prob(x) = \frac{1}{\binom{N}{r}}\sum_y\Prob(y),
  \end{equation}
  where the sum is on all the states of the PASEP with
  particle~$\bullet$ at
  the positions of the~$\bullet$ particles of~$x$, empty sites $\circ$
  at the
  positions of the empty sites $\circ$ of~$x$ and a particle $\bullet$
  or or an empty site $\circ$ at the
  sites with a~$\graybullet$ particle in~$x$.

  For example, we consider the state~$x=\bullet\graybullet\circ$.
  Let~$x_1=\bullet\bullet\circ$ and~$x_2=\bullet\circ\circ$
  be the states associated. We have
  $\Prob(x) = \frac{14}{3\cdot 4!}.$
  Similarly, we have $\Prob(x_1)=\Prob(x_2)=\frac{7}{4!}$, and
  $$
  \frac{14}{3\cdot
    4!}=\frac{1}{\binom{3}{1}}\left(\frac{7}{4!}+\frac{7}{4!}\right).
  $$
\end{remark}

\section{Path interpretation}
\label{sec:paths}

One way to obtain a combinatorial interpretation of the stationary
distribution is to interpret each monomial of the numerator and
denominator of~\eqref{stationProbaMat} as a weighted path. 

We start by defining a new class of paths generalizing the Laguerre
histories.

\begin{definition}
  A {\em marked Laguerre history} of size $(n,r)$ is a Laguerre
  history of size~$n$ where all the steps but the first can be marked
  and~$r$ steps are marked. Any marked step starting from height~$h$
  increases its weight by~$h$.

  This way, a step $\longrightarrow$ or $\nearrow$ has a weight
  between~$0$ and~$h$ if it is not marked and between~$h$ and~$2h$
  otherwise. A step $\dashrightarrow$ or $\searrow$ has a weight
  between~$0$ and~$h-1$ if it is not marked and between~$h$ and~$2h-1$
  otherwise.
\end{definition}

To avoid confusion, all notations for the marked Laguerre histories are represented by
bold letters. An example of a marked Laguerre history of size $(8,2)$ is given
in Figure~\ref{exChemin}. The steps with overlined weight are the marked
steps.

\begin{figure}[h!t]\centering
  \begin{tikzpicture}
  \draw [thin, gray, dotted] (0, 0) grid (8, 3);
  \draw [thick] (0, 0) -- (1, 1) node[midway,scale=1,above]{};
  \draw [thick] (1, 1) -- (2, 1) node[midway,scale=1,above]{$\overline{1}$};
  \draw [thick] (2, 1) -- (3, 2) node[midway,scale=1,above]{};
  \draw [thick,dashed] (3, 2) -- (4, 2) node[midway,scale=1,above]{$\overline{3}$};
  \draw [thick] (4, 2) -- (5, 2) node[midway,scale=1,above]{};
  \draw [thick] (5, 2) -- (6, 1) node[midway,scale=1,above]{$1$};
  \draw [thick] (6, 1) -- (7, 1) node[midway,scale=1,above]{};
  \draw [thick] (7, 1) -- (8, 0) node[midway,scale=1,above]{};
\end{tikzpicture}
  \caption{An example of a Laguerre history of ${\mathfrak
      H}(ADADEDE)$ with total weight $5$.}
  \label{exChemin}
\end{figure}
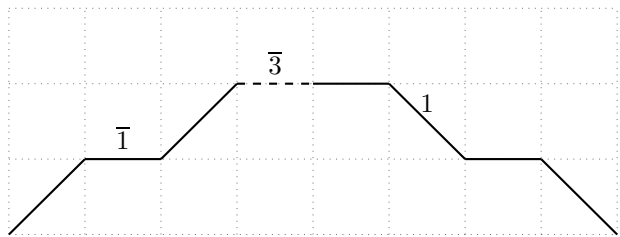

The \emph{total weight} of a marked Laguerre history $\Hg$ (denoted by
$\tw(\Hg)$) is the sum of the weights of it steps. For example, the
total weight of the marked Laguerre history in Figure~~\ref{exChemin}
is $5$. We associate a word of length~$n-1$ to a marked Laguerre
history~$\Hg$ of length~$n$ the following way. The marked steps are
labeled by~$A$, the $\nearrow$ or $\longrightarrow$ steps are
labeled~$D$ and the remaining steps are labeled~$E$. We forget the
label of the first step as it is always~$D$. We call this word the
label of~$\Hg$ and we denote it by $\etiq(\Hg)$. For example, the
label of the marked Laguerre history in Figure~\ref{exChemin} is
$ADADEDE$.

Given a word $X$, let ${\mathfrak H}(X)$ be the set of marked Laguerre
histories with label $X$ and let ${\mathcal Z}_X$ be the generating
polynomial of all the paths:
\begin{equation}
{\mathcal Z}_X(q)=\sum_{H\in {\mathfrak H}(X)} {q^{\rm wt(H)}};
\end{equation}
and let
\begin{equation}
{\mathcal Z}_{N,r}(q)=\sum_X {\mathcal Z}_{X}(q)
\end{equation}
where the sum is over all the words in $\{A,D,E\}^N$ with $r$ letters
$A$ (with $N=n-1$).

The following result gives us a combinatorial interpretation of the
steady-state probabilities of the 2-PASEP in terms of marked Laguerre
histories.

\begin{thm}
  Let $x$ be a state of the 2-PASEP with $N$ sites and $r$ gray
  particles and $X$ be the associated word in $A$, $D$, and $E$. We
  have:
  \begin{equation}
    \Prob(x) = \frac{{\mathcal Z}_{X}(q)}{{\mathcal Z}_{N,r}(q)}.
  \end{equation}
  \label{probaChemin}
\end{thm}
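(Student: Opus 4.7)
The plan is to combine the Matrix Ansatz of Proposition~\ref{uchi} with a direct transfer-matrix interpretation of the quantity $\bra{W}X\ket{V}$ as a weighted sum over lattice paths. Applying the Ansatz with the explicit matrices~\eqref{matrixD}--\eqref{vectorV}, the probability of a state~$x$ with associated word~$X(x)$ is the quotient in~\eqref{stationProbaMat}. Extracting the coefficient of $y^r$ in $\bra{W}(D+yA+E)^N\ket{V}$ simply selects the monomials with exactly $r$ letters~$A$, so the denominator equals $\sum_X \bra{W}X\ket{V}$, summed over all words $X\in\{A,D,E\}^N$ with $r$ letters~$A$. It is therefore enough to prove the word-by-word identity
\begin{equation}
  \bra{W}X\ket{V}=\mathcal{Z}_X(q),
\end{equation}
from which both the numerator match and the equality $[y^r]\bra{W}(D+yA+E)^N\ket{V}=\mathcal{Z}_{N,r}(q)$ follow immediately.

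To establish this identity I expand the matrix product by summing over all tuples of indices $(i_0,i_1,\dots,i_N)$. Since $D$ and $E$ are bidiagonal and $A$ is tridiagonal, consecutive indices can differ by at most one, so each nonzero contribution corresponds to a Motzkin-type path of length $N+1$. The prepended ``virtual'' first step is encoded in $\bra{W}$ via the identity $\bra{W}=\bra{W'}D$ with $\bra{W'}=(1,0,0,\dots)$; this initial step departs from height~$0$ and is precisely the first step of a marked Laguerre history, the one suppressed from the label~$X$. The ending vector $\ket{V}$ then forces the path to return to height~$0$. Thus each nonzero term of the expansion is in bijection with a choice of Motzkin path whose suppressed-first-step label is exactly~$X$.

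The final step is to read off that the matrix entries along such a path reproduce the step-weight generating polynomials appearing in the definition of a marked Laguerre history. At height~$h$, the entries of~$D$ give $1+q+\cdots+q^h=[h+1]_q$ for an unmarked $\longrightarrow$ or $\nearrow$ step; the entries of~$E$ give~$[h]_q$ for an unmarked $\dashrightarrow$ or $\searrow$ step; and the diagonal prefactor in~\eqref{matrixA} contributes the extra factor~$q^h$, which is exactly the ``$+h$'' shift in the weight of a marked step, while the remaining $(D+E)$ factor produces the correct admissible-weight polynomial according to whether the marked step goes up, down, or stays flat. Multiplying these local contributions along a path yields $q^{\tw(H)}$ for a unique $H\in\mathfrak{H}(X)$, and summing over all paths gives $\mathcal{Z}_X(q)$. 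The main obstacle is purely bookkeeping---aligning the row/column index in the matrix entries with the starting height of each step and, especially for the~$A$ letters, verifying that the powers of~$q$ match the marked-step convention exactly---rather than any conceptual difficulty.
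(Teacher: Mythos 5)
Your proposal is correct and follows essentially the same route as the paper: apply the Matrix Ansatz with the explicit solution \eqref{matrixD}--\eqref{vectorV} and expand $\bra{W}X\ket{V}$ transfer-matrix style, reading the index sequence as the heights of a Motzkin path and matching the entries of $D$, $E$, and $A$ at height $h$ with the admissible weights of unmarked and marked steps of a marked Laguerre history, the coefficient extraction $[y^r]$ giving the denominator ${\mathcal Z}_{N,r}(q)$. Your observation $\bra{W}=\bra{W'}D$ with $\bra{W'}=(1,0,0,\dots)$ is just a cleaner way of phrasing the paper's prepended initial $\nearrow$ or $\longrightarrow$ step of weight $0$, not a different argument.
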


\begin{proof} The idea is to associate a marked Laguerre history with
each monomial of the matrix product of the numerator of
\eqref{stationProbaMat}. Any monomial corresponds to the product of
$N$ non-zero coefficients $(X_{k})_{i_k, j_k}$ where $X_k \in \{A, D,
E\}$ is the matrix corresponding to the $k$'th letter of $X$.  As the
indices $(i_k, j_k)$ must satisfy $i_k = j_{k-1}$, they can represent
the successive heights of a path:~$i_k$ corresponds to the starting
height of the $k$'th step and and~$j_k$ corresponds to its ending
height. Moreover, as the matrices~$A$, $D$, and~$E$ are tridiagonal,
$|i_k - j_{k}| \leq 1$ so the paths are Motzkin paths. In order to
have a path starting from height $0$, we need to add a $\nearrow$ or
$\longrightarrow$ step at the beginning of the path depending on which
coefficient of $\bra{W}$ has been extracted. For the steps labeled $D$, we
have $j_k \in \{i_k, i_k+1\}$ so the possible steps are $\nearrow$ or
$\longrightarrow$; for the steps labeled $E$, we have $j_k \in \{i_k,
i_k-1\}$ so the possible steps are $\searrow$ or
$\dashrightarrow$. For the steps labeled~$A$, a coefficient either
comes from the matrix~$D$ or the matrix~$E$. We choose the
corresponding step and mark it in order to be able to invert the
process.

The weight of the $k$'th step of the path corresponds to the power
of $q$ taken in the coefficient $(X_{k})_{i_k, j_k}$. One can see that
for the matrix $D$ (steps $\nearrow$ or $\longrightarrow$) the
possible weights are between $0$ and $i_k$ and that for the matrix $E$
(steps $\searrow$ or $\dashrightarrow$) the possible weights are
between $0$ and $i_k-1$.  Finally for the matrix $A$, the weights are
the same than for $D$ and $E$ on which we added $i_k$ due to the
$\mathbf{diag}(1,q,q^2,q^3,\ldots )$ factor. This proves that the
paths we obtain are exactly the marked Laguerre histories.
\end{proof}

Special cases of Theorem~\ref{probaChemin} are:
\begin{eqnarray}
  {\mathcal Z}_{N,r}(1) & = & {N\choose r}(N+1)! \\
  {\mathcal Z}_{r,r}(q) & = & [r+1]_q!
  \label{heine}
\end{eqnarray}
The first equation immediately follows from the next section which
exhibits a bijection between these marked Laguerre histories and
partially signed permutations. The second equation follows from a
continued fraction proven by Heine. A bijective proof was given by
Biane~\cite{Bi}. Another way to prove it is by using the following
lemma.

\begin{lem}
  If we denote by $m_n^k(q)$ the sum of the weights of the marked
  Laguerre histories of size $n$ with $n-1$ marked steps that end at
  height $k$ (the ending height of the last step is  $k$), we have
  \begin{equation}
    \label{eq:enumLaguerre}
    m_n^k(q) = q^{\binom{k}{2}}\frac{[n]_q!^2}{[n-k]_q![k]_q!}
  \end{equation}
\end{lem}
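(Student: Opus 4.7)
The plan is induction on $n$, decomposing a marked Laguerre history by its last step; equivalently, one is unfolding the matrix identity $m_n^k(q) = \bra{W}A^{n-1}\ket{k+1}$ one factor of $A$ at a time, using that $A$ is tridiagonal. The base case $n=1$ is immediate: the lone (unmarked) first step at height $0$ is forced to have weight $0$ and must be either $\nearrow$ (ending at height $1$) or $\longrightarrow$ (staying at height $0$), so $m_1^0(q) = m_1^1(q) = 1$, matching the closed form, while $m_1^k(q)=0$ for $k\geq 2$ (in agreement with the vanishing factorial denominator).

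For $n \geq 2$ the last step is marked, and chopping it off turns a size-$n$ marked Laguerre history of the relevant type into a size-$(n-1)$ one of the same type (first step still unmarked, all others marked). Summing over the four possibilities for a marked last step ending at height $k$---namely $\nearrow$ from height $k-1$ (weighted contribution $q^{k-1}[k]_q$), $\longrightarrow$ at height $k$ (contribution $q^k[k+1]_q$), $\dashrightarrow$ at height $k$ (contribution $q^k[k]_q$), and $\searrow$ from height $k+1$ (contribution $q^{k+1}[k+1]_q$)---yields the recurrence
\begin{equation*}
  m_n^k(q) = q^{k-1}[k]_q\, m_{n-1}^{k-1}(q) + q^k\bigl([k]_q+[k+1]_q\bigr) m_{n-1}^k(q) + q^{k+1}[k+1]_q\, m_{n-1}^{k+1}(q).
\end{equation*}

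Substituting the conjectured closed form on both sides and dividing through by $q^{\binom{k}{2}}[n-1]_q!^2/([n-k]_q![k]_q!)$, the inductive step collapses to the $q$-identity
\begin{equation*}
  [n]_q^2 = [k]_q^2 + q^k\bigl([k]_q+[k+1]_q\bigr)[n-k]_q + q^{2k+1}[n-k]_q[n-k-1]_q,
\end{equation*}
which follows from squaring $[n]_q = [k]_q + q^k[n-k]_q$ and simplifying the right-hand side via $[k+1]_q = [k]_q + q^k$ and $q[n-k-1]_q = [n-k]_q - 1$; both sides reduce to $[k]_q^2 + 2q^k[k]_q[n-k]_q + q^{2k}[n-k]_q^2$. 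I expect the main obstacle to be pure bookkeeping rather than anything conceptual, namely verifying that the boundary indices behave correctly. Both dangerous cases resolve automatically: at $k=0$ the phantom term $m_{n-1}^{-1}$ is killed by the coefficient $q^{-1}[0]_q=0$, and when $k+1$ exceeds the maximal height reachable in $n-1$ steps the closed form gives $m_{n-1}^{k+1}=0$ through a vanishing factorial denominator, so no separate case analysis is required.
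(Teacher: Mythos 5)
Your proof is correct and follows essentially the same route as the paper: induction on $n$, peeling off the (marked) last step to get the recurrence $m_n^k = q^{k-1}[k]_q m_{n-1}^{k-1} + q^k([k]_q+[k+1]_q)m_{n-1}^{k} + q^{k+1}[k+1]_q m_{n-1}^{k+1}$, then verifying the closed form against it (the paper simplifies via $q^k[n-k]_q=[n]_q-[k]_q$, you via squaring $[n]_q=[k]_q+q^k[n-k]_q$, which is the same computation), with the same observation that the boundary cases $k=0$, $k=n-1$, $k=n$ are absorbed automatically.
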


\begin{proof}
  We prove this lemma by induction. If $n=1$, the possible values for
  $k$ are $0$ and $1$. In both cases there is only one path of weight
  $1$ as the first step is never marked.

  Suppose the property true for $n-1$. A path of length $n$ ending at
  height $k$ can be either:
  \begin{itemize}
  \item a path of length $n-1$ ending at height $k+1$ followed by a
    $\searrow$ step;
  \item a path of length $n-1$ ending at height $k$ followed by a
    $\dashrightarrow$ step or a $\longrightarrow$ step;
  \item a path of length $n-1$ ending at height $k-1$ followed by a
    $\nearrow$ step.
  \end{itemize}
  Hence, we have
  \begin{equation}
    \label{eq:recMnk}
    m_n^k  =  m_{n-1}^{k+1}q^{k+1}[k+1]_q + m_{n-1}^{k}q^{k}\Big([k]_q
    + [k+1]_q\Big) + m_{n-1}^{k-1}q^{k-1}[k]_q.
  \end{equation}
  Using \eqref{eq:enumLaguerre} to compute $m_{n-1}^{k+1}$,
  $m_{n-1}^{k}$, and $m_{n-1}^{k-1}$ we have the following.
  \begin{equation}
    \label{eq:coeffsMnk}
    \left\{\begin{array}{rcl}
      m_{n-1}^{k+1}& = & \displaystyle
       \frac{[n-1]_q!^2}{[n-k]_q![k]_q!}q^{\binom{k}{2}+2k+1}[n-k]_q[n-k-1]_q\\[13pt]
      m_{n-1}^{k}& = & \displaystyle
                       \frac{[n-1]_q!^2}{[n-k]!_q[k]_q!}q^{\binom{k}{2}+k}[n-k]_q
                       \Big([k]_q+[k+1]_q\Big)\\[13pt]
      m_{n-1}^{k-1}& = & \displaystyle
       \frac{[n-1]_q!^2}{[n-k]_q![k]_q!}q^{\binom{k}{2}}[k]_q^2\\
    \end{array}\right.
  \end{equation}

  Using the fact that $q^k[n-k]_q = [n]_q-[k]_q$,~\eqref{eq:recMnk}
  becomes
  \begin{equation*}
    q^{\binom{k}{2}}\frac{[n-1]_q!^2}{[n-k]_q[k]_q}\left(
      ([n]_q-[k]_q)([n]_q-[k+1]_q) + ([n]_q-[k]_q)([k]_q+[k+1]_q)
      +[k]_q^2\right)
  \end{equation*}
  which simplifies to~\eqref{eq:enumLaguerre}.

  Note that the special cases $k=0$, $k=n-1$, and $k=n$ are correctly
  treated. Indeed, in~\eqref{eq:coeffsMnk} we have $m_{n-1}^{k+1}$
  equals $0$ for $k=n$ and $k=n-1$, we have $m_{n-1}^{k}$ equals $0$
  for $k=n$, and $m_{n-1}^{k-1}$ equals $0$ for $k=0$.
\end{proof}

We also give a recurrence satisfied by $\mathcal{Z}_X(q)$.
\begin{proposition}
  \label{propRec}
  Let~$X$ be a word of size~$N$ in the letters~$A$, $D$, and~$E$
 and $s$ be an
  integer. Denote by~$k$ the number of letters~$A$ or~$E$ in~$X$, we
  have
  \begin{eqnarray}
    \label{propRec_id1}
    \mathcal{Z}_{A^s\cdot D\cdot X}(q) &=& [k+1]_q\mathcal{Z}_{A^s\cdot X}(q)
    +\sum_{X=X_1\cdot E\cdot X_2}q^{\kappa(X_1)}
    \mathcal{Z}_{A^s\cdot X_1\cdot D\cdot X_2}(q);\\
    \label{propRec_id2}
    \mathcal{Z}_{A^s\cdot E\cdot X}(q) &=& [s+1]_q\mathcal{Z}_{A^s\cdot
      X}(q);\\
    \label{propRec_id3}
    \mathcal{Z}_{A^s}(q) &=& [s+1]_q!,
  \end{eqnarray}
  where $A^s$ is the word with~$s$ times the letter~$A$
  and~$\kappa(X_1)$ is the number of~$E$ and~$A$ in~$X_1$. 
\end{proposition}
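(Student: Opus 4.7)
My plan is to derive all three identities from the Matrix Ansatz relations established in Proposition~\ref{uchi}, using the identification $\mathcal{Z}_X(q) = \bra{W} X \ket{V}$ implicit in the proof of Theorem~\ref{probaChemin} (each monomial of $\bra{W} X \ket{V}$ is recorded by a marked Laguerre history with label $X$), together with the explicit evaluation~\eqref{eq:enumLaguerre} from the preceding Lemma. For~\eqref{propRec_id3}, a marked Laguerre history with label $A^s$ is a Motzkin path of size $s+1$ whose first step is unmarked and whose remaining $s$ steps are all marked; as a Motzkin path it ends at height $0$. Specialising the Lemma to $n = s+1$ and $k = 0$ immediately gives
\begin{equation*}
\mathcal{Z}_{A^s}(q) \;=\; m_{s+1}^{0}(q) \;=\; q^{\binom{0}{2}}\,\frac{[s+1]_q!^2}{[s+1]_q!\,[0]_q!} \;=\; [s+1]_q!.
\end{equation*}

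For~\eqref{propRec_id2}, I would first establish by induction on $s$, using $AE = qEA + A$, the commutation identity
\begin{equation*}
A^s E \;=\; q^s\,E A^s \;+\; [s]_q\,A^s.
\end{equation*}
Left-multiplying by $\bra{W}$ and right-multiplying by $X\ket{V}$, then invoking $\bra{W}E = \bra{W}$, the right-hand side collapses to $(q^s + [s]_q)\,\bra{W} A^s X \ket{V} = [s+1]_q\,\mathcal{Z}_{A^s\cdot X}(q)$, as required.

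The core of the proof is~\eqref{propRec_id1}, where I push the central $D$ rightward through $X$ using the three rules $DD = DD$, $DA = qAD + A$ and $DE = qED + D + E$. Each passage past a $D$ is transparent, while each passage past an $A$ or $E$ contributes a factor $q$. Absorption at an $A$ (via the $A$ term of $DA$), or at an $E$ through the $E$ term of $DE$, leaves the word $A^s\cdot X$ unchanged and contributes $q^{\kappa}\,\mathcal{Z}_{A^s\cdot X}(q)$, where $\kappa$ counts the $A$'s and $E$'s strictly to the left of the absorption position. The $D$ term of $DE$ instead replaces the $E$ at that position by $D$, producing the contribution $q^{\kappa(X_1)}\,\mathcal{Z}_{A^s\cdot X_1\cdot D\cdot X_2}(q)$ for the induced decomposition $X = X_1\cdot E\cdot X_2$. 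Finally, if the $D$ exits on the right without being absorbed, $D\ket{V} = \ket{V}$ removes it and one has accumulated the factor $q^k$. As the absorption position ranges over the $k$ slots of $X$ carrying an $A$ or $E$, the exponent $\kappa$ runs through $\{0, 1, \ldots, k-1\}$ exactly once, so the unchanged-word contributions add up to $(q^k + [k]_q)\,\mathcal{Z}_{A^s\cdot X}(q) = [k+1]_q\,\mathcal{Z}_{A^s\cdot X}(q)$. Combined with the replacement terms, this is precisely~\eqref{propRec_id1}.

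The main obstacle is this expansion step: the three-term relation $DE = qED + D + E$ branches into three genuinely different combinatorial outcomes, and the push of $D$ must be organised so that each monomial of the fully expanded $\bra{W} A^s D X \ket{V}$ appears exactly once, with the correct $q$-weight. Once this bookkeeping is in place, identities~\eqref{propRec_id2} and~\eqref{propRec_id3} are essentially routine consequences of the Matrix Ansatz and the Lemma.
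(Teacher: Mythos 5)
Your proposal is correct and takes essentially the same route as the paper: both rest on the identification $\mathcal{Z}_X(q)=\bra{W}X\ket{V}$ and prove~\eqref{propRec_id1} by pushing the extra $D$ rightward through $X$ with the relations $DA=qAD+A$ and $DE=qED+D+E$ until absorption, replacement of an $E$ by $D$, or exit via $D\ket{V}=\ket{V}$ (the paper merely formalizes your bookkeeping as an induction on the length of $X$ with a general left factor $Y$). Likewise, your treatment of~\eqref{propRec_id2} (induction on $s$ via $AE=qEA+A$ and $\bra{W}E=\bra{W}$, giving $q^s+[s]_q=[s+1]_q$) and of~\eqref{propRec_id3} (the lemma on $m_n^k(q)$ specialized to $m_{s+1}^0$) matches the paper's argument, which invokes~\eqref{heine} proved by that same lemma.
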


\begin{proof}
  Note that $\mathcal{Z}_{A^s}(q)=\mathcal{Z}_{r,r}(q)$, such
  that~\eqref{propRec_id3} directly comes from~\eqref{heine}.

  To prove the other parts of this property, we use the fact that
  $\mathcal{Z}_Y(q)$ is equal to the matrix product
  $\bra{W}m(Y)\ket{V}$ where~$m$ is the morphism sending the letters
  of~$Y$ to the matrix satisfying the Ansatz of
  Proposition~\ref{uchi}. We shall identify~$m(Y)$ and~$Y$ in the rest
  of this proof.

  We prove Equation~\eqref{propRec_id2} by induction on~$s$. If~$s=0$,
  this equation is just another way of writing~\eqref{ansatzAlpha}:
  $\bra{W}E=\bra{W}$. Otherwise, using~\eqref{ansatzAE}, $AE=qEA+A$,
  we have
  \begin{eqnarray*}
    \bra{W}A^{s}EX\ket{V} &=& q\bra{W}A^{s-1}EAX\ket{V} +
    \bra{W}A^{s-1}X\ket{V} \\ 
    &=& (q[s]_q! + 1)\bra{W}A^sX\ket{V},
  \end{eqnarray*}
  using the induction relation, so we obtain
  $$\bra{W}A^sEX\ket{V}=[s+1]_q!\bra{W}SX\ket{V}.$$

  For Equation~\eqref{propRec_id1}, we prove more generally that for
  any word~$Y$, we have
  \begin{equation}
    \label{preuveRec_id1}
    \bra{W}YDX\ket{V} = [k+1]_q\bra{W}YX\ket{V}
                           +\sum_{X=X_1EX_2}q^{\kappa(X_1)}
                           \bra{W}YX_1DX_2\ket{V}.
  \end{equation}
  It suffices then to set~$Y=A^s$ to obtain the result. We prove this
  equation by induction on the size of~$X$. If~$X$ is the empty word,
  it is simply another writing of~\eqref{ansatzBeta}:
  $D\ket{V}=\ket{V}$. Otherwise, let~$X'$ be the word obtained
  from~$X$ by removing the first letter. There are three
  possibilities.
  \begin{itemize}
  \item If~$X=DX'$, let $Y'=YD$. We then have
    $\bra{W}YDX\ket{V}=\bra{W}Y'DX'\ket{V}$. Let us prove that the
    induction relation applied to $\bra{W}Y'DX'\ket{V}$ gives us the
    same result that for $\bra{W}YDX\ket{V}$. As~$X$ and~$X'$ have the
    same number of letters~$A$ and~$E$, the first term of the right
    part of~\eqref{preuveRec_id1} equals
    $[k+1]_q\bra{W}Y'X'\ket{V}$. Moreover, we can write~$X$
    as~$X_1EX_2$ if and only if we can write~$X'$ as 
    $X_1'EX_2$ with $X_1=DX_1'$. Moreover, in these equalities, $X_1$
    and $X_1'$ have the same number of~$E$ and~$A$. Thus,
    \begin{equation*}
      \sum_{X'=X_1'EX_2}q^{\kappa(X_1')}
      \bra{W}Y'X_1'DX_2\ket{V} = \sum_{X=X_1EX_2}q^{\kappa(X_1)}
      \bra{W}Y'X_1'DX_2\ket{V},
    \end{equation*}
    so the induction is satisfied using the fact that
    $Y'X'=YX$ and $Y'X_1'=YX_1$.
  \item If~$X=AX'$, let $Y'=YA$. Using~\eqref{ansatzDA}
    ($DA=qAD+A$), we have
    \begin{equation}
      \label{YDAXonYADX}
      \bra{W}YDX\ket{V}=q\bra{W}Y'DX'\ket{V}+\bra{W}YX\ket{V}.
    \end{equation}
    As~$X'$ and~$X$ have the same number of letters~$E$, we can
    write~$X$ as $X_1EX_2$ if and only if we can write $X'$ as
    $X_1'EX_2$ with $X_1=AX_1'$. Moreover, in these equalities, $X_1$
    and $X_1'$ have the same number of~$E$ and~$X_1$ has one more~$A$
    than~$X_1'$. Hence, the induction relation implies
    \begin{eqnarray*}       
      \bra{W}Y'DX'\ket{V}  &=& [k]_q\bra{W}Y'X'\ket{V}
      +\sum_{X'=X_1'EX_2}q^{\kappa(X_1')}
      \bra{W}Y'X_1'DX_2\ket{V} \\
      &=& [k]_q\bra{W}YX\ket{V}
      +\sum_{X=X_1EX_2}q^{\kappa(X_1)-1}
      \bra{W}YX_1DX_2\ket{V},
    \end{eqnarray*}
    as $Y'X'=YX$ and $Y'X_1'=YX_1$. Using this equality
    in~\eqref{YDAXonYADX}, the induction relation is satisfied.
  \item If~$X = EX'$, let $Y'= YE$. Using~\eqref{ansatzDE}
    ($DE=qED+E+D$), we have
    \begin{equation}
      \label{YDEXonYEDX}
      \bra{W}YDX\ket{V}=q\bra{W}Y'DX'\ket{V}+\bra{W}YX\ket{V}
      +\bra{W}YDX'\ket{V}.
    \end{equation}
    As~$X$ starts with an~$E$, except for the case $X=EX'$,
    we can write $X$ as $X_1EX_2$ if and only if $X'$ can be written
    as $X_1'EX_2$ with $X_1=AX_1'$. Moreover, in these equalities,
    $X_1$ and $X_1'$ have the same number of~$A$ and $X_1$ has one
    more~$E$ than~$X_1'$. Hence, the induction relation implies:
    \begin{eqnarray*}       
      \bra{W}Y'DX'\ket{V}  &=& [k]_q\bra{W}Y'X'\ket{V}
      +\sum_{X'=X_1'EX_2}q^{\kappa(X_1')}
      \bra{W}Y'X_1'DX_2\ket{V},
    \end{eqnarray*}
    and the sum can be rewritten as
    \begin{equation*}
      \sum_{X=X_1EX_2}q^{\kappa(X_1)-1}
      \bra{W}YX_1DX_2\ket{V}-\frac{1}{q}\bra{W}YDX'\ket{V}.
    \end{equation*}
    Using this equality in~\eqref{YDEXonYEDX}, the induction relation
    is satisfied.
  \end{itemize}
  Hence, the induction relation is satisfied in every cases, which
  ends the proof.
\end{proof}

Note that this proof implies that equations~\eqref{propRec_id1}
and~\eqref{propRec_id2} are satisfied for any combinatorial object
interpreting the probabilities of the $2$-PASEP, whereas
Equation~\eqref{propRec_id3} depends on the objects themselves.

As a corollary we obtain a factorization of $Z_{X}(q)$.

\begin{corollary}
  \label{cor:Zfactor}
  Let $X$ be a word of $\{A, D,E\}^N$ with $r$ letters $A$. we have
  \begin{equation}
    Z_X(q) = [r+1]_q!\widetilde{Z_X(q)},
  \end{equation}
  where $\widetilde{Z_X(q)}$ is a polynomial in $q$ with nonnegative
  integer coefficients.
\end{corollary}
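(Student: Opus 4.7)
My plan is to prove the corollary by induction on the length $N$ of $X$, with Proposition~\ref{propRec} as the sole engine. The induction hypothesis will state that for every word $Y$ of length strictly less than $N$ containing exactly $r$ letters $A$, one has $\mathcal{Z}_Y(q) = [r+1]_q!\, P(q)$ for some $P(q) \in \mathbb{N}[q]$.

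For the base case, the shortest word with $r$ letters $A$ must be $X = A^r$, and Equation~\eqref{propRec_id3} then gives $\mathcal{Z}_{A^r}(q) = [r+1]_q!$, so $\widetilde{\mathcal{Z}_X}(q) = 1$. For the inductive step I would write $X = A^s Y$ with $s$ chosen maximal, so that $Y$ is nonempty and starts with $D$ or $E$. If $Y = EY'$, Equation~\eqref{propRec_id2} gives $\mathcal{Z}_X(q) = [s+1]_q\, \mathcal{Z}_{A^s Y'}(q)$, and since $A^s Y'$ has length $N-1$ with the same number $r$ of letters $A$, the induction hypothesis immediately yields $\widetilde{\mathcal{Z}_X}(q) = [s+1]_q\, \widetilde{\mathcal{Z}_{A^s Y'}}(q) \in \mathbb{N}[q]$.

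If instead $Y = DY'$, I would use Equation~\eqref{propRec_id1} to express $\mathcal{Z}_X(q)$ as an $\mathbb{N}[q]$-linear combination of $\mathcal{Z}_{A^s Y'}(q)$ and of the polynomials $\mathcal{Z}_{A^s X_1 D X_2}(q)$ indexed by decompositions $Y' = X_1 E X_2$. Each of these arguments has length $N - 1$ (we either delete the leading $D$, or delete the leading $D$ and replace an internal $E$ by a $D$) and still contains exactly $r$ letters $A$, since all the rewriting operations only involve $D$'s and $E$'s. The induction hypothesis applies to each term, and pulling out the common factor $[r+1]_q!$ exhibits $\widetilde{\mathcal{Z}_X}(q)$ as a sum of products of polynomials in $\mathbb{N}[q]$, hence itself in $\mathbb{N}[q]$.

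There is no real obstacle here; the argument is essentially bookkeeping. The two facts that one must make explicit are that the scalar prefactors $[s+1]_q$, $[k+1]_q$, and $q^{\kappa(X_1)}$ appearing in Proposition~\ref{propRec} all lie in $\mathbb{N}[q]$ (they are $q$-integers or monomials), and that every word appearing on the right-hand sides of \eqref{propRec_id1}--\eqref{propRec_id2} has exactly $r$ letters $A$, so that the divisor $[r+1]_q!$ does not change through the recursion.
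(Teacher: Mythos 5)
Your proof is correct and takes exactly the route the paper intends: the corollary is stated as an immediate consequence of Proposition~\ref{propRec}, and your explicit induction on the word length (noting that the recursion preserves the number of letters $A$, has coefficients $[k+1]_q$, $[s+1]_q$, $q^{\kappa(X_1)}$ in $\mathbb{N}[q]$, and terminates at $\mathcal{Z}_{A^r}(q)=[r+1]_q!$) is just the bookkeeping the paper leaves implicit.
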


It would be interesting to find a combinatorial proof of this result
as it would allow us to give a combinatorial interpretation of the
probabilities of the 2-PASEP with a general value for $q$ using a
smaller family of objects.

In Definition~\ref{equiv} we define an equivalence relation on
partially signed permutations implying a combinatorial proof of
Corollary~\ref{cor:Zfactor} for $q=1$. 

\section{Combinatorial interpretation using partially signed
  permutations}
\label{sec:perm}

In order to obtain a combinatorial interpretation of these
probabilities in terms of generalized permutations, we shall use a
generalization of the Françon-Viennot bijection. The original
bijection, defined in~\cite{FV}, is a bijection between Laguerre
histories and permutations that we extend to a bijection between
marked Laguerre histories and partially signed permutations.

\begin{algo}
  \label{algo:FVGen}
  \begin{itemize}[before = \leavevmode, itemsep=0pt]
  \item Input: A partially signed permutation~$\ssig$ of size~$n$. 
  \item Output: A marked Laguerre history $\Hg$ of size~$n$.
  \item Execution: Let $H=\psi_{FV}(\sigma)$ where $\sigma$ is the
    permutation obtained from $\ssig$ by removing the signs. For every
    $\overline{i} \in \ssig$, mark the $i\!$'th step of $H$ to
    build~$\Hg$. When marking a step starting at height~$h$, add~$h$
    to its weight.
  \end{itemize}  
\end{algo}

We denote by $\Psi_{FV}(\ssig)$ the result of this algorithm. For
example, for $\ssig=\overline{2}57836\overline{4}1$, the image
of~$\ssig$ without the signs is given on the left of
Figure~\ref{chemin} and the marked version on the right.

\begin{figure}[h!t]\centering
  \begin{center}
   \begin{tikzpicture}
  \node (EE) at (0,0){\scalebox{0.73}{\begin{tikzpicture}
  \draw [thin, gray, dotted] (0, 0) grid (8, 3);
  \draw [thick] (0, 0) -- (1, 1) node[midway,scale=1,above]{};
  \draw [thick] (1, 1) -- (2, 1) node[midway,scale=1,above]{};
  \draw [thick] (2, 1) -- (3, 2) node[midway,scale=1,above]{};
  \draw [thick, dashed] (3, 2) -- (4, 2)node[midway,scale=1,above]{$1$};
  \draw [thick] (4, 2) -- (5, 2) node[midway,scale=1,above]{};
  \draw [thick] (5, 2) -- (6, 1) node[midway,scale=1,above]{$1$};
  \draw [thick] (6, 1) -- (7, 1) node[midway,scale=1,above]{};
  \draw [thick] (7, 1) -- (8, 0) node[midway,scale=1,above]{};
\end{tikzpicture}}};
  \node (DE) at (6.6,0){\scalebox{0.73}{\begin{tikzpicture}
  \draw [thin, gray, dotted] (0, 0) grid (8, 3);
  \draw [thick] (0, 0) -- (1, 1) node[midway,scale=1,above]{};
  \draw [thick] (1, 1) -- (2, 1) node[midway,scale=1,above]{$\overline{1}$};
  \draw [thick] (2, 1) -- (3, 2) node[midway,scale=1,above]{};
  \draw [thick, dashed] (3, 2) -- (4, 2)
  node[midway,scale=1,above]{$\overline{3}$};
  \draw [thick] (4, 2) -- (5, 2) node[midway,scale=1,above]{};
  \draw [thick] (5, 2) -- (6, 1) node[midway,scale=1,above]{$1$};
  \draw [thick] (6, 1) -- (7, 1) node[midway,scale=1,above]{};
  \draw [thick] (7, 1) -- (8, 0) node[midway,scale=1,above]{};
\end{tikzpicture}}};
\end{tikzpicture}
    \caption{Result of Françon-Viennot bijection for the
      permutation $25783641$ on the left and for the partially signed
      permutation $\ssig=\overline{2}57836\overline{4}1$ on the
      right.}
    \label{chemin}
  \end{center}
\end{figure}

The reciprocal map is obtained by storing the positions of the marked
steps in the marked Laguerre history and then applying the reciprocal
map of the usual Françon-Viennot bijection. Finally, just overline the
values corresponding to the marked steps.

\begin{proposition}
  The map $\psi_{FV}$ is a bijection between
  partially signed permutations of size $n$ with $r$ overlined values
  and marked Laguerre histories of size $n$ with $r$ marked steps.

  Moreover, let $\ssig\in B_n'$. We have
  \begin{eqnarray}
    \ade(\GC(\ssig)) &=& \etiq(\psi_{FV}(\ssig)); \\
    \tw(\ssig) &=& \tw(\psi_{FV}(\ssig)).
  \end{eqnarray}
  \label{propFV}
\end{proposition}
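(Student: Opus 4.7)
The plan is to reduce everything to the classical Fran\c con--Viennot bijection $\psi_{FV}$ applied to the underlying unsigned permutation, then track what the extra marking (resp.\ signing) does on both sides. For the bijection claim, the key point is that adding the height $h$ to an originally unmarked weight in $[0,h]$ (for steps $\nearrow$ or $\longrightarrow$) yields a marked weight in $[h,2h]$, and similarly $[0,h-1]$ becomes $[h,2h-1]$ for $\searrow$ or $\dashrightarrow$; these are exactly the admissible marked-step ranges in the definition of a marked Laguerre history. Since $1$ is never overlined in a partially signed permutation, the first step of $\Psi_{FV}(\ssig)$ is never marked, again matching the convention. Invertibility is immediate: read off the positions of the marked steps (recovering $\Sign(\ssig)$), subtract $h$ from each marked step's weight, then apply Algorithm~\ref{algoInverseFV} to the resulting ordinary Laguerre history. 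For the label identity, I will compare letters position by position: by the definition of $\GC$, the $i$-th letter of $\ade(\GC(\ssig))$ records whether $i+1$ is overlined (giving $A$), a non-overlined value followed by a smaller one (giving $E$), or neither (giving $D$); and Algorithm~\ref{algoFV} together with the marking convention of Algorithm~\ref{algo:FVGen} assigns to step $i+1$ of $\Psi_{FV}(\ssig)$ exactly the same letter.

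For the statistic identity $\tw(\ssig) = \tw(\Psi_{FV}(\ssig))$, the main input is the standard fact that the height of step $k$ in $\psi_{FV}(\sigma)$ equals $\#\{i \in \{1,\ldots,n-1\} : \sigma_i \geq k > \sigma_{i+1}\}$, whereas the classical weight of step $k$ equals $\#\{i : \sigma_i > k > \sigma_{i+1}\}$, the $\312$ patterns of $\sigma$ with $k$ in the role of $2$. Two sign-order computations close the argument. First, the $\312$ patterns of $\ssig$ in the order $\overline{1}<1<\overline{2}<2<\cdots$ coincide with those of the underlying $\sigma$: whenever the underlying values $a > c > b$ of $(\ssig_i,\ssig_j,\ssig_{i+1})$ are strictly ordered, every sign assignment preserves $\ssig_i > \ssig_j > \ssig_{i+1}$, and conversely the signed strict inequalities force $a > c > b$ on underlying values. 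Second, for each overlined $\overline{k}$, the $(31,\overline{k})$ patterns correspond bijectively to indices $i$ with $|\ssig_i|\geq k > |\ssig_{i+1}|$, which matches the height of step $k$ exactly, including the boundary case $\ssig_i = \overline{k}$ itself (which contributes precisely when $k$ is a peak or double descent in $\sigma$, i.e.\ the situation where the classical height exceeds the classical weight by one). Summing the marked-weight identity (classical weight $+$ height) over all overlined $k$ and the classical weights over the remaining $k$ yields $\tw(\ssig)$.

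I expect the main obstacle to be the careful sign-order bookkeeping in this last step. In particular, the asymmetry $\geq$ versus $>$ in the definition of $(31,\overline{2})$ is exactly what allows $\ssig_i = \overline{k}$ to contribute to its own $(31,\overline{k})$ count and aligns the total $(31,\overline{2})$ statistic with the height rather than with the classical weight of step $k$; once that subtlety is handled, everything else is a direct reading of the two algorithms.
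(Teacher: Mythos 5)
Your route is essentially the paper's: reduce to the classical Fran\c{c}on--Viennot bijection on the underlying permutation, recover the signs from the positions of the marked steps for the inverse map, match the letters $A$, $E$, $D$ of $\ade(\GC(\ssig))$ against marked, unmarked-descending, and remaining steps, and account for the extra weight of the marked step at position $k$ by the fact that its height equals the number of descents $\ssig_i>\ssig_{i+1}$ with $|\ssig_i|\geq k>|\ssig_{i+1}|$, i.e.\ exactly the $(31,\overline{k})$ count. (The paper's proof of the bijectivity goes by a counting-plus-injectivity argument rather than your explicit inverse, and it establishes the height fact via Algorithm~\ref{algoInverseFV} by counting the $\circ$ slots present when $k$ is inserted; you call that fact ``standard'', so either cite it or include that short slot-counting argument to be self-contained. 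Your explicit handling of the boundary case $\ssig_i=\overline{k}$, i.e.\ the $\geq$, is actually more careful than the paper's wording, which states the straddling condition with strict inequalities.)

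One auxiliary identity you assert is false and should be dropped: the FV weight of step $k$ is \emph{not} $\#\{i:\sigma_i>k>\sigma_{i+1}\}$. For $\sigma=231$ the step of $2$ has weight $0$ while that count is $1$ (the history carrying weight $1$ on that step is the one of $312$); by Algorithm~\ref{algoFV} the weight of step $k$ is the number of $31\!-\!2$ patterns with $k$ in the role of the $2$, which requires the descent to lie to the \emph{left} of $k$, so the two quantities differ in general. The same confusion infects your parenthetical ``the classical height exceeds the classical weight by one exactly when $k$ is a peak or double descent'': that is true of the straddling count but not of the FV weight (for $25783641$ and $k=7$ the height is $1$, the weight $0$, and $7$ is a double rise). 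Fortunately neither statement is load-bearing: your final summation only needs (i) that the sum over all $k$ of the FV weights is the total number of $31\!-\!2$ patterns of $\sigma$, hence of $\ssig$ by your sign-order remark, which holds by the definition of the weights, and (ii) the height formula with $\geq$, summed over the overlined $k$, giving the total $(31,\overline{2})$ count. With the erroneous per-step identity removed, the proof is correct and follows the same lines as the paper's.
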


\begin{proof}
  As permutations and Laguerre histories are in bijection, there are
  as many partially signed permutations of size~$n$ with~$r$ overlined
  values as marked Laguerre histories of size~$n$ with~$r$ marked
  steps. Moreover, let~$\ssig$ and~$\ttau$ be two partially signed
  permutations having the same image~$\Hg$ by~$\psi_{FV}$. As the
  Françon-Viennot bijection is injective on permutations, the
  permutations~$\sigma$ and~$\tau$ obtained from~$\ssig$ and~$\ttau$
  are equal. In addition, the positions of the marked steps of~$\Hg$
  correspond to the overlined values of~$\ssig$ and~$\ttau$. They are
  therefore the same and so $\ssig=\ttau$.

  Let~$\ssig$ be a partially signed permutation and
  $\Hg=\psi_{FV}(\ssig)$, recall from Definition~\ref{def:312} that
  $\tw(\ssig)$ is the number of $\312$ patterns of $\ssig$ plus its
  number of $(31,\overline{2})$ patterns.  By construction, the number
  of $\312$ patterns of $\ssig$ is equal to
  the weight of~$\Hg$ if we remove the additional weights of the
  marked steps. In order to prove that $\tw(\ssig)=\tw(\Hg)$, we prove
  that the height~$h$ of a marked step of~$\Hg$ in position~$k$ is
  equal to the number of descents $\ssig_i >\ssig_{i+1}$ of~$\ssig$
  such that $\ssig_{i+1}<k<\ssig_i$. We use the reciprocal map of the
  Françon-Viennot bijection recalled in
  Algorithm~\ref{algoInverseFV} where~$h$ corresponds to the
  number of~$\circ$ present when we add the value~$k$ in the
  permutation. Moreover, all these positions except for the last one
  shall be occupied by a value greater than~$k$ and shall create a
  descent with the value to its right. Thus, there are~$h$
  descents satisfying the desired property.

  Finally, depending if~$k$ is overlined or not and forms a descent or
  not,  the $k$'th step of~$H$ is labeled by~$A$, $E$, or~$D$ and
  $\ade(\GC(\ssig))$ is equal to the label of $\psi_{FV}(\ssig)$.
\end{proof}

Using Theorem~\ref{probaChemin} and Proposition~\ref{propFV} we obtain
the following theorem.

\begin{thm}
  Let~$x$ be a state of the $2$-PASEP with~$N$ sites having~$r$ gray
  particles. We have
  \begin{equation}
    \Prob(x) =
    \frac{1}{Z_{N,r}(q)}\sum_{\ade(\GC(\ssig))=X(x)}q^{\tw(\ssig)},
  \end{equation}
  where $Z_{N,r}$ is the sum of $q^{\tw(\ssig)}$ for all partially
  signed permutations~$\ssig$ of size~$N+1$ having~$r$ overlined values
  \label{probaPerm}
\end{thm}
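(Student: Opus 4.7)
The plan is to combine Theorem~\ref{probaChemin} with the enriched bijection of Proposition~\ref{propFV} by direct substitution. From Theorem~\ref{probaChemin}, for a state $x$ with $N$ sites and $r$ gray particles,
\[
  \Prob(x) = \frac{\mathcal{Z}_{X(x)}(q)}{\mathcal{Z}_{N,r}(q)},
\]
where $\mathcal{Z}_{X(x)}(q)$ enumerates marked Laguerre histories with label $X(x)$ weighted by $q^{\tw}$, and $\mathcal{Z}_{N,r}(q)$ sums $q^{\tw}$ over all marked Laguerre histories whose label is a word of length $N$ in $\{A,D,E\}$ containing $r$ letters $A$. Since a marked Laguerre history of size $n$ has an $(n-1)$-letter label (the first step, which is always a $D$, is dropped), the relevant histories have size $n = N+1$ and $r$ marked steps.

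Next I would invoke Proposition~\ref{propFV}: the map $\psi_{FV}$ is a bijection from partially signed permutations $\ssig$ of size $N+1$ with $|\Sign(\ssig)| = r$ onto marked Laguerre histories of size $(N+1, r)$, and under this bijection $\etiq(\psi_{FV}(\ssig)) = \ade(\GC(\ssig))$ together with $\tw(\psi_{FV}(\ssig)) = \tw(\ssig)$. Applying the bijection termwise, the numerator rewrites as
\[
  \mathcal{Z}_{X(x)}(q) = \sum_{\substack{\ssig \in B_{N+1}' \\ \ade(\GC(\ssig)) = X(x)}} q^{\tw(\ssig)},
\]
and the denominator becomes the sum of $q^{\tw(\ssig)}$ over all $\ssig \in B_{N+1}'$ with exactly $r$ overlined values, which is $Z_{N,r}(q)$ as defined in the statement. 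Dividing these two expressions gives the claimed formula.

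This substitution is essentially the whole proof, so there is no substantive obstacle: the analytic content has already been carried by Theorem~\ref{probaChemin} (which itself rests on the Matrix Ansatz of Proposition~\ref{uchi} together with the explicit matrix solution given in Section~\ref{secPASEP}), while the combinatorial content is packaged in the enriched Françon-Viennot bijection of Proposition~\ref{propFV}. The only subtlety worth flagging explicitly is the size convention, namely that a state with $N$ sites corresponds to an $N$-letter word $X(x)$ and hence to partially signed permutations of size $N+1$, so that the outer index set $B_{N+1}'$ in both the numerator and denominator is the correct one.
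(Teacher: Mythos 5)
Your proposal is correct and follows exactly the paper's route: the theorem is stated there as an immediate consequence of Theorem~\ref{probaChemin} combined with Proposition~\ref{propFV}, which is precisely the termwise substitution you carry out (including the size shift from $N$-letter words to permutations of size $N+1$).
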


Hence, partially signed permutations can be used to describe the
probabilities of the $2$-PASEP for any value of the parameter~$q$.

In order to give a combinatorial proof of Corollary~\ref{cor:Zfactor}
in the case $q=1$, we define here an equivalence relation on partially
signed permutations.

\begin{definition}
  \label{equiv}
  Let~$\ssig$ be a partially signed permutation of size~$n$ with~$r$
  overlined values. Let~$i_1, \dots, i_{r+1}$ be the positions of
  these values and of~$1$, and let $u_1,\dots,u_{r+2}$
  be the factors of~$\ssig$ of the form
  \begin{equation}
    u_k := \ssig_{i_{k-1}+1}\dots\ssig_{i_k},
  \end{equation}
  with $i_0 = 0$ and $i_{r+2}=n$. Let~$\ttau$ be a partially signed
  permutation, we say that~$\ssig$ and~$\ttau$ are equivalent 
  ($\ssig\sim\ttau$) if  there is a permutation $\mu\in\SG_{r+1}$
  such that
  \begin{equation}
    \ttau = u_{\mu_1}\dots u_{\mu_{r+1}}u_{r+2}.
  \end{equation}
\end{definition}

For example, for $\ssig=\overline{2}73\overline{4}5186$, we have
$u_1=\overline{2}$, $u_2=73\overline{4}$, $u_3=51$ and $u_4=86$. For
$\mu=312$, we have $\ttau=51\overline{2}73\overline{4}86$. The set of
the partially signed permutations equivalent to~$\ssig$ is:
\begin{equation*}
  \{ \overline{2}5173\overline{4}86,~\overline{2}73\overline{4}5186,~
  51\overline{2}73\overline{4}86,~5173\overline{4}\overline{2}86,~
  73\overline{4}\overline{2}5186,~73\overline{4}51\overline{2}86\}.
\end{equation*}

\begin{lem}
  Let $\ssig$ and $\ttau$ be two partially signed permutations such
  that $\ssig\sim\ttau$. We have
  \begin{equation}
    \GC(\ssig)=\GC(\ttau)
  \end{equation}
\end{lem}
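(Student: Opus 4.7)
The plan is to reduce the lemma to showing separately that $\Sign(\ssig)=\Sign(\ttau)$ and $\GDes(\ssig)=\GDes(\ttau)$, since $\GC$ is defined directly from these two sets. The first equality is essentially free: the equivalence only permutes the blocks $u_1,\dots,u_{r+1}$ as indivisible atoms, so a value is overlined in $\ssig$ if and only if it is overlined in $\ttau$. The whole content of the proof therefore lies in the second equality.

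For the second equality I would exploit the specific way the blocks are cut. By construction, for every $k\le r+1$, the last entry of $u_k$ is either overlined or equal to $1$. An overlined entry is signed and is therefore excluded from $\GDes$ by definition. The value $1$ is the smallest entry that can appear in a partially signed permutation (under the order $\overline{1}<1<\overline{2}<2<\cdots$, and since $\overline{1}$ never appears), so $1$ cannot be followed by a strictly smaller value, and thus $1 \notin \GDes$ either. Consequently, no value that is a candidate for $\GDes$ sits at the final position of any block $u_k$ with $k\le r+1$.

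This means that for any unsigned value $i$, the right neighbor of $i$ in $\ssig$ is determined entirely by the block of $\ssig$ containing $i$: if $i$ lies in $u_k$ with $k\le r+1$ it must be at a non-final position, so its right neighbor is the next entry of $u_k$; and if $i$ lies in $u_{r+2}$ then its right neighbor is either the next entry of $u_{r+2}$ or, when $i$ is the final entry of the whole permutation, the virtual value $\ssig_{n+1}=n+1$. In both situations the right neighbor is unchanged when we pass from $\ssig$ to $\ttau$: the internal structure of each $u_k$ is preserved, and $u_{r+2}$ is fixed at the end of the permutation in both. Hence the descent test $\ssig_j>\ssig_{j+1}$ has the same outcome for $i$ in $\ssig$ and in $\ttau$, giving $\GDes(\ssig)=\GDes(\ttau)$.

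Combining the two equalities with the definition of $\GC$ yields $\GC(\ssig)=\GC(\ttau)$. The only delicate point I anticipate is keeping track of the boundary cases cleanly, in particular making sure that the edge case where $u_{r+2}$ is empty (so the final block $u_{r+1}$ physically sits at the end of $\ssig$ but not of $\ttau$) is handled: in that case the last entry of $u_{r+1}$ is still signed or equal to $1$, so it again fails to contribute to $\GDes$ regardless of what its right neighbor happens to be, and the argument goes through unchanged.
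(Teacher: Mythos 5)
Your proof is correct and follows essentially the same route as the paper: $\Sign$ is preserved because $\sim$ only permutes blocks as atoms, and $\GDes$ is preserved because every unsigned value other than $1$ keeps the same right neighbor, since each block $u_k$ ($k\le r+1$) ends in an overlined value or in $1$ and the tail $u_{r+2}$ stays fixed. The paper states this more tersely; your treatment of the boundary cases is just a more explicit version of the same argument.
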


\begin{proof}
  Firstly, we have $\Sign(\ssig)=\Sign(\ttau)$ as~$\sim$ does not
  change the signs of the values. Moreover, the value on the right of
  a not overlined value (other than~$1$) is the same in~$\ssig$ and
  in~$\ttau$ such that $\GDes(\ssig)=\GDes(\ttau)$, which ends the
  proof.
\end{proof}

As there are $(r+1)!$ partially signed permutations in the equivalence
class of a permutation with~$r$ overlined values, this lemma implies
Corollary~\ref{cor:Zfactor} for $q=1$. Note that the above equivalence
relation leaves $\GC$ unchanged but does not modify the weight in order
to obtain the $[r+1]_q!$ factor. 

Each equivalence class may be identified by an \emph{assemblée of
  permutation}, a family of object used in~\cite{MV} to give a
combinatorial interpretation of the probabilities of the 2-PASEP in
the case $q=1$ with other parameters.

\section{Another interpretation}
\label{sectionLargeLaguerre}

In Section~\ref{secPASEP} we show that for any state $x$ of the
2-PASEP and $X=X(x)$, we have $\Prob(x)=\Prob(\invol(x))$ where
$\invol$ is the particle hole symmetry involution of
Definition~\ref{def:invol}.

Unfortunately, this property cannot be observed directly on the
marked Laguerre histories and on the partially signed permutations. In
the case of the usual PASEP, this property also exists and can be
observed using large Laguerre histories~\cite{JV}. Using the
connection between Laguerre histories and large Laguerre histories, we
define the \emph{marked large Laguerre histories} and use them to
obtain another interpretation of the probabilities.

\subsection{Marked large Laguerre histories}

Let us start by recalling the definition of large Laguerre
histories~\cite{Vi}.

\begin{definition}
  A large Laguerre history of size $n$ is a weighted Motzkin path of
  size $n$ with two different horizontal steps such that the weight of
  a step $\nearrow$, $\longrightarrow$, $\searrow$, or
  $\dashrightarrow$ starting from height $h$ is between $0$ and $h$.
\end{definition}

\begin{definition}
  A \emph{marked large Laguerre history} of size $(n,r)$ is a large
  Laguerre history of size~$n$ such that every step may be marked
  and~$r$ steps are marked. If a step $\nearrow$ or $\longrightarrow$
  starting at height~$h$ is marked, its weight is increased by~$h+1$
  where the marked steps $\searrow$ or $\dashrightarrow$ starting at
  height~$h$ increase their weight by~$h$.

  Hence, a step $\longrightarrow$ or $\nearrow$ has a weight
  between~$0$ and~$h$ if it is not marked and between~$h+1$ and~$2h+1$
  if it is marked. A step $\dashrightarrow$ or $\searrow$ has a weight
  between~$0$ and~$h$ if it is not marked and between~$h$ and~$2h$ if
  it is marked.

  The \emph{label} of a marked large Laguerre history~$\Hg$ of size
  $(n,r)$ is the word of size~$n$ in the letters~$A$, $D$ and~$E$
  obtained by sending the marked steps on~$A$, the steps~$\nearrow$
  and $\longrightarrow$ on~$D$ and the remaining steps on~$E$. We
  denote it by $\etiq(\Hg)$.
\end{definition}

Note that the marked large Laguerre histories of size $(n,0)$ are
exactly the large Laguerre histories of size $n$.

An example of a marked large Laguerre history is given in
Figure~\ref{exLargLagMark}. We denote by ${\mathfrak H}^0(X)$ the set
of all marked large Laguerre histories whose label is~$X$.

\begin{figure}[ht]
  \begin{center}
    \begin{tikzpicture}
  \draw [thin, gray, dotted] (0, 0) grid (7, 3);
  \draw [thick] (0, 0) -- (1, 1) node[midway,scale=1,above]{$\overline{1}$};
  \draw [thick] (1, 1) -- (2, 1) node[midway,scale=1,above]{};
  \draw [thick] (2, 1) -- (3, 1) node[midway,scale=1,above]{$\overline{2}$};
  \draw [thick] (3, 1) -- (4, 2) node[midway,scale=1,above]{$1$};
  \draw [thick] (4, 2) -- (5, 1) node[midway,scale=1,above]{};
  \draw [thick,dashed] (5, 1) -- (6, 1) node[midway,scale=1,above]{$1$};
  \draw [thick] (6, 1) -- (7, 0) node[midway,scale=1,above]{};
\end{tikzpicture}
    \caption{A marked large Laguerre history of size $(8,2)$ which
      label is $ADADDEDE$.}
    \label{exLargLagMark}
  \end{center}
\end{figure}
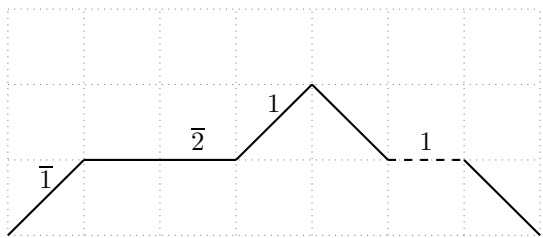

In order to use these objects to obtain a combinatorial interpretation
of the probabilities of the $2$-PASEP, we define a bijection between
marked Laguerre histories and marked large Laguerre histories. This
map is a generalization of a bijection between Laguerre histories and
large Laguerre histories described by the second author
in~\cite{Nun} using weighted Dyck path. The following algorithm is
another description of this map directly on Laguerre histories.

\begin{algo}
  \label{algo:Laguerre}
  \begin{itemize}[before = \leavevmode, itemsep=0pt]
  \item Input: a Laguerre history $H$ of size $n$.
  \item Output: a large Laguerre history~$H'$ of size $n-1$.
  \item Execution: for all $1 \leq i < n$, build $H'_i$ (the $i\!$'th
    step of $H'$) using the
    following table
    \renewcommand{\arraystretch}{2}
    \begin{equation}
      \label{eq:tableLaguerre}
      \begin{array}{c|c|c}
        H_i\backslash H_{i+1} & \nearrow \text{ or } \longrightarrow
        & \searrow \text{ or } \dashrightarrow \\ \hline
        \nearrow \text{ or } \dashrightarrow & \nearrow
        & \dashrightarrow \\ \hline
        \searrow \text{ or } \longrightarrow & \longrightarrow
        & \searrow
      \end{array}
    \end{equation}
    \renewcommand{\arraystretch}{2}
    The weight of $H'_i$ is equal to the weight of $H_i$. 
  \end{itemize}
\end{algo}

We denote by $\Psi$ the map associated with
Algorithm~\ref{algo:Laguerre}. The reciprocal map can also be
described using a similar table from large Laguerre histories of
size~$n$ to Laguerre histories of size~$n+1$. An example of $\Psi$ is
given in Figure~\ref{fig:exPsi}

\begin{figure}[ht]
  \begin{center}
    \begin{tikzpicture}
  \node (LH) at (0,0){\scalebox{0.75}{\begin{tikzpicture}
  \draw [thin, gray, dotted] (0, 0) grid (8, 3);
  \draw [thick] (0, 0) -- (1, 1) node[midway,scale=1,above]{};
  \draw [thick] (1, 1) -- (2, 1) node[midway,scale=1,above]{};
  \draw [thick] (2, 1) -- (3, 2) node[midway,scale=1,above]{};
  \draw [thick, dashed] (3, 2) -- (4, 2)node[midway,scale=1,above]{$1$};
  \draw [thick] (4, 2) -- (5, 2) node[midway,scale=1,above]{};
  \draw [thick] (5, 2) -- (6, 1) node[midway,scale=1,above]{$1$};
  \draw [thick] (6, 1) -- (7, 1) node[midway,scale=1,above]{};
  \draw [thick] (7, 1) -- (8, 0) node[midway,scale=1,above]{};
\end{tikzpicture}}};
  \node (LLH) at (7,0){\scalebox{0.75}{\begin{tikzpicture}
  \draw [thin, gray, dotted] (0, 0) grid (7, 3);
  \draw [thick] (0, 0) -- (1, 1) node[midway,scale=1,above]{};
  \draw [thick] (1, 1) -- (2, 1) node[midway,scale=1,above]{};
  \draw [thick,dashed] (2, 1) -- (3, 1) node[midway,scale=1,above]{};
  \draw [thick] (3, 1) -- (4, 2) node[midway,scale=1,above]{$1$};
  \draw [thick] (4, 2) -- (5, 1) node[midway,scale=1,above]{};
  \draw [thick,dashed] (5, 1) -- (6, 1) node[midway,scale=1,above]{$1$};
  \draw [thick] (6, 1) -- (7, 0) node[midway,scale=1,above]{};
\end{tikzpicture}}};

  \draw[->] (LH) -- (LLH) node[midway,above] {$\Psi$};

\end{tikzpicture}
    \caption{An example of $\Psi$ on a Laguerre history of size 8.}
    \label{fig:exPsi}
  \end{center}
\end{figure}
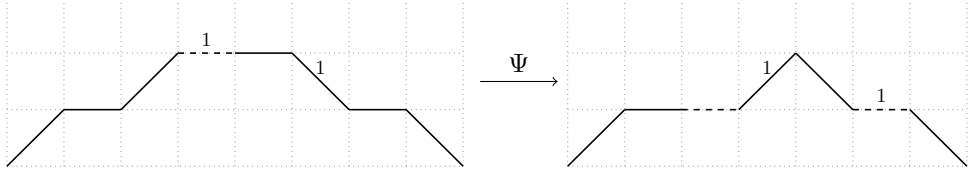

The map $\Psi$ is a variant of a well known bijection between Laguerre
histories and large Laguerre histories using their common
representation as weighted Dyck paths~\cite{Nun}. The proof that it is a
bijection uses the following lemma.

\begin{lem}
  \label{lem:psiBij}
  Let $H$ be a Laguerre history of size $n$ and $0<k<n$. Let $h_k$ be
  the height of the $k\!$'th step of $H$ and $h'_k$  be
  the height of the $k\!$'th step of $\Psi(H)$
  \begin{itemize}
  \item If $H_k$ is  $\nearrow$ or $\longrightarrow$, then $h'_k=h_k$.
  \item If $H_k$ is  $\searrow$ or $\dashrightarrow$, then $h'_k=h_k-1$.
  \end{itemize}
\end{lem}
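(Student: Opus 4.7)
The plan is to prove the lemma by induction on $k$.

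For the base case $k=1$, both paths start at the origin, so $h_1 = h'_1 = 0$. Since a $\searrow$ or $\dashrightarrow$ step from height $0$ would require a weight in the range $\{0,\dots,-1\}$, which is empty, the first step $H_1$ must be $\nearrow$ or $\longrightarrow$, and the first case of the lemma gives $h'_1 = 0 = h_1$, as required.

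For the inductive step, I would assume the formula at index $k$ and write $h'_{k+1} = h'_k + \delta'_k$ and $h_{k+1} = h_k + \delta_k$, where $\delta'_k$ and $\delta_k \in \{-1,0,+1\}$ are the net vertical changes of $H'_k$ and $H_k$ respectively. The table in Algorithm~\ref{algo:Laguerre} determines $H'_k$, and hence $\delta'_k$, from the pair $(H_k, H_{k+1})$, so the natural approach is to split on the four cells of that table. In each cell, both possible values of $H_k$ occur---one of type $\{\nearrow,\longrightarrow\}$ and one of type $\{\searrow,\dashrightarrow\}$---and by induction these yield different values of $h'_k$; however, they also yield different values of $\delta_k$, and these differences cancel in $h'_{k+1}$. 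A direct arithmetic check in each cell then shows that $h'_{k+1}$ depends only on the type of $H_{k+1}$ (up/solid vs.\ down/dashed), in the form stated by the lemma.

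There is no serious obstacle beyond careful bookkeeping. The key observation---which also explains why the peculiar row-grouping of the table in Algorithm~\ref{algo:Laguerre} produces a well-defined map---is that the discrepancy $h_k - h'_k$ is controlled by the type of $H_k$, while the step $H'_k$ is designed precisely so that this discrepancy propagates correctly to $h_{k+1} - h'_{k+1}$ as a function of the type of $H_{k+1}$. In effect, the content of the lemma is exactly what forces the cells of the table to be what they are.
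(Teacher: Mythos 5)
Your proposal is correct and follows essentially the same route as the paper: induction on $k$, with the base case forced by the fact that a down or dashed step cannot start at height $0$, and an inductive step that runs through the cells of the table in Algorithm~\ref{algo:Laguerre}, tracking how the discrepancy $h_k-h'_k$ (determined by the type of $H_k$) propagates to step $k+1$. The arithmetic check you defer does go through in every cell, so the plan is sound and matches the paper's case analysis up to how the cases are grouped.
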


\begin{proof}
  We prove this lemma by induction on $k$. Let $H'=\Psi(H)$. We
  have $h_1=h'_1=0$ and $H_1$ is indeed $\nearrow$ or
  $\longrightarrow$. In the case 
  $k=2$, if $H_2$ is $\nearrow$ or $\longrightarrow$ then, using the
  table in~\eqref{eq:tableLaguerre}, $H'_1=H_1$ as it cannot be $\searrow$ or
  $\dashrightarrow$. This proves that $h_2=h'_2$. Moreover, if $H_2$ is
  $\searrow$ or $\dashrightarrow$ then $H_1$ is necessarily $\nearrow$
  and so $H'_1$ is $\dashrightarrow$ and $h'_2=0 = h_2-1$.

  Suppose the lemma true for $k-1$, there are four cases obtained
  directly from~\eqref{eq:tableLaguerre}.
  \begin{itemize}
  \item If $H_{k-1}$ is  $\nearrow$ or $\longrightarrow$ and $H_k$ is
    $\nearrow$ or $\longrightarrow$ then $H'_{k-1}=H_{k-1}$ and
    $h'_{k-1}=h_{k-1}$ which implies $h'_{k}=h_{k}$.
  \item If $H_{k-1}$ is  $\searrow$ or $\dashrightarrow$ and $H_k$ is
    $\nearrow$ or $\longrightarrow$ then $H'_{k-1}=H_{k-1}$ and
    $h'_{k-1}=h_{k-1}-1$ which implies $h'_{k}=h_{k}-1$.
  \item If $H_{k-1}$ is  $\nearrow$ or $\longrightarrow$ and $H_k$ is
    $\searrow$ or $\dashrightarrow$ then $h'_{k-1}=h_{k-1}$ and
    $h'_{k}=h_{k}-1$.
  \item If $H_{k-1}$ is  $\searrow$ or $\dashrightarrow$ and $H_k$ is
    $\searrow$ or $\dashrightarrow$ then $h'_{k-1}=h_{k-1}-1$ and
    $h'_{k}=h_{k}$.
  \end{itemize}
\end{proof}

\begin{lem}
  \label{labelInvol}
  Let~$H$ be a Laguerre history. We have
  \begin{eqnarray}
    \label{statInvol}
    \etiq(H) &=& \etiq(\Psi(H));\\
    \label{weightInvol}
    \tw(H) &=& \tw(\Psi(H)).
  \end{eqnarray}
\end{lem}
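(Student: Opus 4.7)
The plan is to verify the two identities independently, both by a direct column-by-column reading of the table~\eqref{eq:tableLaguerre} that defines $\Psi$, with one boundary remark for the weight statement. Lemma~\ref{lem:psiBij} is available to guarantee height compatibility if needed, but the argument is really local: it only uses the four entries of~\eqref{eq:tableLaguerre} and the weight constraint at the final step.

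For the label identity, I would unpack both sides position by position. For $i \in \{1, \ldots, n-1\}$, the letter $\etiq(H)_i$ is $D$ when $H_{i+1} \in \{\nearrow, \longrightarrow\}$ and $E$ otherwise, since we discard the label of the always-rising first step of $H$. On the other side, $\etiq(\Psi(H))_i$ is $D$ when $\Psi(H)_i \in \{\nearrow, \longrightarrow\}$ and $E$ otherwise, because $\Psi(H)$ carries no marked steps. A glance at the two columns of Algorithm~\ref{algo:Laguerre}'s table then finishes the job: the left column (where $H_{i+1}$ is $\nearrow$ or $\longrightarrow$) produces $\Psi(H)_i \in \{\nearrow, \longrightarrow\}$, and the right column (where $H_{i+1}$ is $\searrow$ or $\dashrightarrow$) produces $\Psi(H)_i \in \{\searrow, \dashrightarrow\}$. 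So $\etiq(H)_i = \etiq(\Psi(H))_i$ for every $i$.

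For the total weight, the algorithm stipulates that $\Psi(H)_i$ inherits the weight of $H_i$ for $i=1,\ldots,n-1$, so $\tw(\Psi(H)) = \sum_{i=1}^{n-1} w_i = \tw(H) - w_n$, where $w_n$ is the weight of the final step of $H$. To close the argument I would show $w_n = 0$: since $H$ ends at height $0$, the step $H_n$ is either $\longrightarrow$ at height $0$, whose weight is forced to lie in $[0,0]$, or $\searrow$ from height $1$ to $0$, whose weight is forced to lie in $[0, h-1] = [0,0]$. Either way $w_n = 0$, and the two total weights coincide.

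The main obstacle, if it deserves the name, is purely bookkeeping: one must read~\eqref{eq:tableLaguerre} in the right direction to match $D,E$-labels between $H$ and $\Psi(H)$, and invoke the degenerate-weight condition on the terminating step of a Laguerre history. Neither step requires a nontrivial combinatorial idea, so I expect the proof to be essentially a case analysis plus a one-line boundary observation.
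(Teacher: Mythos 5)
Your proof is correct and follows essentially the same route as the paper: the label identity is read off column-by-column from the table~\eqref{eq:tableLaguerre}, and the weight identity comes from the fact that $\Psi$ copies the weights of the first $n-1$ steps. Your explicit boundary remark that the discarded final step of $H$ necessarily has weight $0$ (since it ends at height $0$) is a small point the paper's proof leaves implicit, but it is the same argument.
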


\begin{proof}
  The weight stays unchanged when applying~$\Psi$ which proves
  Equation~\eqref{weightInvol}.

  Moreover, using~\eqref{eq:tableLaguerre} we have that for $i<n$, the
  $i$'th step of $\Psi(H)$ is $\longrightarrow$ or $\nearrow$ if and
  only if it is the case of the $(i+1)$-st step of $H$. This
  proves~\eqref{statInvol}.
\end{proof}

Note that this result is a reformulation of the first two points of
Proposition~3.6 of~\cite{Nun}.

Let us now extend this map to the marked versions of these objects.
\begin{algo}
  \label{LagToLargLag}
  \begin{itemize}[before = \leavevmode, itemsep=0pt]
  \item Input: a marked Laguerre history $\Hg$.
  \item Output: a marked large Laguerre history~$\Hg'$.
  \item Execution:
    \begin{itemize}
    \item let $H$ be the Laguerre history obtained from~$\Hg$
      by removing its marks;
    \item $H'=\Psi(H)$;
    \item for all~$k$ such that~$\Hg_k$ is marked, mark the
      $(k\!-\!1)\!$'th step of~$H'$ to build~$\Hg'$;
    \item for all marked steps of~$\Hg$, if it is a $\longrightarrow$
      (resp. $\dashrightarrow$), change it in $\dashrightarrow$
      (resp. $\longrightarrow$).
    \end{itemize}
  \end{itemize}
\end{algo}

As the execution of this algorithm is the same as the execution of
Algorithm~\ref{algo:Laguerre} when applied to Laguerre history, this
one is a generalization to marked Laguerre histories that we also
denote by~$\Psi$. An example of an execution of~$\Psi$ on
a marked Laguerre history is given on Figure~\ref{exPsiMark}. In this
example, the Laguerre history $H$ and the large Laguerre history $H'$
are the ones of Figure~\ref{fig:exPsi}.

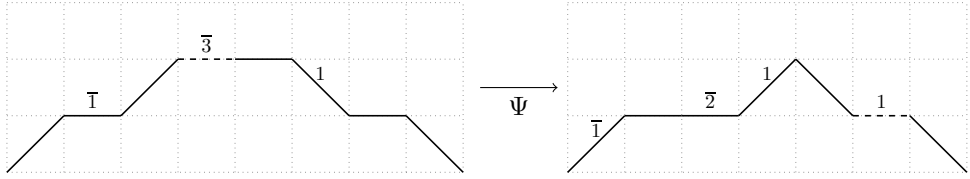
\begin{figure}[ht]
  \begin{center}
    \begin{tikzpicture}
  \node (LH) at (0,0){\scalebox{0.75}{\begin{tikzpicture}
  \draw [thin, gray, dotted] (0, 0) grid (8, 3);
  \draw [thick] (0, 0) -- (1, 1) node[midway,scale=1,above]{};
  \draw [thick] (1, 1) -- (2, 1) node[midway,scale=1,above]{$\overline{1}$};
  \draw [thick] (2, 1) -- (3, 2) node[midway,scale=1,above]{};
  \draw [thick, dashed] (3, 2) -- (4, 2)
  node[midway,scale=1,above]{$\overline{3}$};
  \draw [thick] (4, 2) -- (5, 2) node[midway,scale=1,above]{};
  \draw [thick] (5, 2) -- (6, 1) node[midway,scale=1,above]{$1$};
  \draw [thick] (6, 1) -- (7, 1) node[midway,scale=1,above]{};
  \draw [thick] (7, 1) -- (8, 0) node[midway,scale=1,above]{};
\end{tikzpicture}}};
  \node (LLH) at (7,0){\scalebox{0.75}{\begin{tikzpicture}
  \draw [thin, gray, dotted] (0, 0) grid (7, 3);
  \draw [thick] (0, 0) -- (1, 1) node[midway,scale=1,above]{$\overline{1}$};
  \draw [thick] (1, 1) -- (2, 1) node[midway,scale=1,above]{};
  \draw [thick] (2, 1) -- (3, 1) node[midway,scale=1,above]{$\overline{2}$};
  \draw [thick] (3, 1) -- (4, 2) node[midway,scale=1,above]{$1$};
  \draw [thick] (4, 2) -- (5, 1) node[midway,scale=1,above]{};
  \draw [thick,dashed] (5, 1) -- (6, 1) node[midway,scale=1,above]{$1$};
  \draw [thick] (6, 1) -- (7, 0) node[midway,scale=1,above]{};
\end{tikzpicture}}};

  \draw[->] (LH) -- (LLH) node[midway,below] {$\Psi$};
\end{tikzpicture}
    \caption{A marked Laguerre history on the left and its image
      by~$\Psi$ on the right. The intermediate steps correspond to the
      paths in Figure~\ref{fig:exPsi}.}
    \label{exPsiMark}
  \end{center}
\end{figure}

\begin{proposition}
  The map~$\Psi$ sending marked Laguerre histories to marked large
  Laguerre histories is a bijection. Moreover, for every marked Laguerre
  history~$\Hg$, we have
  \begin{eqnarray}
    \label{labelPsi}
    \etiq(\Hg) & = &\etiq(\Psi(\Hg)); \\
    \label{weightPsi}
    \tw(\Hg) & = &\tw(\Psi(\Hg)).
  \end{eqnarray}
\end{proposition}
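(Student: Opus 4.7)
My plan is to reduce everything to the unmarked bijection $\Psi$ between Laguerre histories of size $n$ and large Laguerre histories of size $n-1$, which is already established together with Lemma~\ref{lem:psiBij} and Lemma~\ref{labelInvol}. A marked Laguerre history $\Hg$ of size $(n,r)$ is determined by its underlying Laguerre history $H$ (obtained by stripping the marks and subtracting the bonus $h$ from the weight of each marked step at height $h$) together with the subset $S \subseteq \{2, \dots, n\}$ of marked positions, of cardinality $r$. Since the first step is never marked, the shift $k \mapsto k-1$ is a bijection between $r$-subsets of $\{2, \dots, n\}$ and $r$-subsets of $\{1, \dots, n-1\}$. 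I would therefore define the inverse of $\Psi$ on marked objects by stripping marks, applying the known inverse of the unmarked $\Psi$, then re-marking position $k+1$ in the resulting Laguerre history for each marked position $k$ in the large Laguerre history and performing the $\longrightarrow \leftrightarrow \dashrightarrow$ swap in the reverse direction.

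I would then verify that the rules in Algorithm~\ref{LagToLargLag} actually produce a valid marked large Laguerre history, and conversely. This is a case analysis indexed by the types of $\Hg_{k-1}$ and $\Hg_k$. By Lemma~\ref{lem:psiBij}, the starting height $h'_{k-1}$ of the newly marked step in $\Hg'$ equals $h_{k-1}$ if $\Hg_{k-1}$ is up or horizontal and $h_{k-1}-1$ if $\Hg_{k-1}$ is down or dashed, while the table in \eqref{eq:tableLaguerre} prescribes the direction of $\Hg'_{k-1}$ in terms of $\Hg_{k-1}$ and $\Hg_k$. In each case one compares the weight range $[h_k, 2h_k]$ or $[h_k, 2h_k-1]$ allowed at position $k$ in $\Hg$ with the range $[h'_{k-1}+1, 2h'_{k-1}+1]$ or $[h'_{k-1}, 2h'_{k-1}]$ allowed at position $k-1$ in $\Hg'$, and checks that, after the horizontal swap, the two intervals coincide. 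This simultaneously proves bijectivity and that weights can be carried over step by step.

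For the label equality \eqref{labelPsi}, note that position $k$ of $\Hg$ corresponds to position $k-1$ of $\Hg'$ under the label conventions (the first step of $\Hg$ is dropped, and $\Hg'$ has $n-1$ steps in total). A marked step of $\Hg$ at position $k$ is sent to a marked step of $\Hg'$ at position $k-1$, and both contribute the letter $A$. At unmarked positions, Lemma~\ref{labelInvol} applied to the unmarked pair $(H, \Psi(H))$ asserts that the $D/E$ labels at matching positions already agree, and the swap rule only touches marked positions, so the $D/E$ portion of the labels is left undisturbed. Hence $\etiq(\Hg) = \etiq(\Psi(\Hg))$. The weight equality \eqref{weightPsi} then combines Lemma~\ref{labelInvol} (which handles the unmarked contributions) with the bonus-matching established in the case analysis above.

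The main obstacle is the bookkeeping around the marking bonus: in $\Hg$ the bonus added at a marked step of starting height $h$ is always $h$, regardless of direction, whereas in $\Hg'$ the bonus is $h+1$ for an up or horizontal step and $h$ for a down or dashed step. Reconciling this asymmetry with the position shift $k \mapsto k-1$ is exactly what forces the $\longrightarrow \leftrightarrow \dashrightarrow$ swap prescribed in Algorithm~\ref{LagToLargLag}; making this precise in every case of the table is the computational heart of the proof, but each case is routine once Lemma~\ref{lem:psiBij} is invoked to locate the marked step at the correct height.
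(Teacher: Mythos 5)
Your overall plan follows the paper's architecture: bijectivity inherited from the unmarked $\Psi$ together with the shift of the marks from $\{2,\dots,n\}$ to $\{1,\dots,n-1\}$; the label identity~\eqref{labelPsi} deduced from Lemma~\ref{labelInvol}, since marked positions go to marked positions (both giving $A$) and the $\longrightarrow\leftrightarrow\dashrightarrow$ swap only touches marked steps; and the weight identity~\eqref{weightPsi} reduced, via Lemma~\ref{labelInvol}, to matching the extra weight contributed by each mark, settled by a case analysis driven by Lemma~\ref{lem:psiBij} and the table~\eqref{eq:tableLaguerre}. Your last paragraph also correctly identifies the asymmetry ($+h$ on all marked steps of $\Hg$ versus $+h+1$ or $+h$ on marked steps of $\Psi(\Hg)$, depending on type) as the reason the swap is needed.

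However, the verification you propose in the middle step would fail as stated. Algorithm~\ref{algo:Laguerre} keeps the numerical weights in place (the weight of $H'_i$ equals that of $H_i$); only the mark, and hence only the bonus, travels from position $k$ to position $k-1$. So there is no reason for the full weight interval of the marked step at position $k$ of $\Hg$ to coincide with the interval at position $k-1$ of $\Psi(\Hg)$, and in general it does not: if $\Hg_{k-1}=\Hg_k=\nearrow$ with $k$ marked, the allowed weights at position $k$ form $[h_k,2h_k]$ ($h_k+1$ values), while $H'_{k-1}$ is a marked $\nearrow$ of height $h_k-1$, whose allowed weights form $[h_k,2h_k-1]$ ($h_k$ values). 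Hence ``carrying the weights over step by step'' along with the marks cannot work, and interval coincidence cannot prove bijectivity. The correct accounting separates the two ingredients: the base weights stay put and their ranges are matched position by position by Lemma~\ref{lem:psiBij} ($[0,h_i]$ or $[0,h_i-1]$ in $H$ against $[0,h'_i]$ in $\Psi(H)$ --- this is exactly what makes the unmarked $\Psi$ a weight-preserving bijection), while separately the bonus $h_k$ of the marked step of $\Hg$ equals the bonus of step $k-1$ of $\Psi(\Hg)$ after the swap; the latter is the four-case computation (for instance $H'_{k-1}=\nearrow$ forces, by~\eqref{eq:tableLaguerre}, $H_k\in\{\nearrow,\longrightarrow\}$, so $h'_k=h_k$ and $h'_{k-1}=h'_k-1=h_k-1$, giving bonus $h'_{k-1}+1=h_k$). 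Replacing your interval-coincidence check by this split turns your proposal into the paper's proof.
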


\begin{proof}
  The fact that~$\Psi$ is a bijection comes directly from the fact
  that $\Psi$ is a bijection from Laguerre histories to large Laguerre
  histories.

  As the positions of the letters $A$ are the same in $\etiq(\Hg)$ and
  in $\etiq(\Psi(\Hg))$ and that the non-marked steps of $\psi(\Hg)$
  are the same as the steps of $\psi(H)$,
  Equation~\eqref{labelPsi} is a consequence of
  Lemma~\ref{labelInvol}.

  Let $\Hg'=\Psi(\Hg)$. Equation~\eqref{weightInvol} implies that we
  only need to prove that the weight we add to a marked step of~$\Hg'$
  corresponds to the added weight as we mark the corresponding step
  in~$\Hg$ in order to prove~\eqref{weightPsi}. 

  Let $(h_1,\dots, h_n)$ and $(h'_1,\dots,h'_{n-1})$ be such that $h_i$
  (resp. $h'_i$) is the height of the $i$'th step of $\Hg$
  (resp. $\Hg'$). Let~$k$ be the position of a marked step
  of~$\Hg$. Let~$H$ be the Laguerre history obtained from~$\Hg$ by  
  removing the marked steps and their additional weight and $H'$ the
  one obtained from $\Hg'$. As we exchange the steps $\longrightarrow$
  and $\dashrightarrow$ when we mark a step and as the
  $\longrightarrow$ and $\nearrow$ have a greater weight increase when
  marked on large Laguerre histories, we need to prove
  that~$h'_{k-1}=h_k-1$ if~$H'_{k-1}$ is a 
  $\nearrow$ step or a $\dashrightarrow$ step and that~$h'_{k-1}=h_k$
  otherwise. Let us treat the four different cases using
  Lemma~\ref{lem:psiBij} and~\eqref{eq:tableLaguerre}.
  \begin{itemize}
  \item If $H'_{k-1}$ is $\nearrow$ we have
    $h'_{k-1}=h'_k-1$. Moreover, $H_k$ is $\nearrow$ step or
    $\longrightarrow$ so $h'_{k}=h_k$ and then $h'_{k-1}=h_k-1$.
  \item If $H'_{k-1}$ is $\dashrightarrow$ we have
    $h'_{k-1}=h'_k$. Moreover, $H_k$ is $\searrow$ or
    $\dashrightarrow$ so $h'_{k}=h_k-1$ and then $h'_{k-1}=h_k-1$.
  \item If $H'_{k-1}$ is $\longrightarrow$ we have
    $h'_{k-1}=h'_k$. Moreover, $H_k$ is $\nearrow$ step or
    $\longrightarrow$ so $h'_{k}=h_k$ and then $h'_{k-1}=h_k$.
  \item If $H'_{k-1}$ is $\searrow$ we have
    $h'_{k-1}=h'_k+1$. Moreover, $H_k$ is $\searrow$ or
    $\dashrightarrow$ so $h'_{k}=h_k-1$ and then $h'_{k-1}=h_k$.
    \qedhere
  \end{itemize}
\end{proof}

We then deduce the following combinatorial interpretation.
\begin{corollary}
  Let $x$ be a state of the $2$-PASEP with~$N$ sites and~$r$ gray
  particles. We have
  \begin{equation}
    \Prob(x) = \frac{1}{Z_{N,r}(q)}
    \sum_{\Hg\in{\mathfrak H}^0(X(x))}q^{\tw(\Hg)},
  \end{equation}
  where $Z_{N,r}(q)$ is the generating series of the weights of the
  marked large Laguerre histories of size~$N$ with~$r$ marked steps.
\end{corollary}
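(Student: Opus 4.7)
The plan is to derive this corollary directly from Theorem~\ref{probaChemin} by transporting the combinatorial model along the bijection $\Psi$ just constructed. Recall that Theorem~\ref{probaChemin} expresses $\Prob(x)$ as a ratio of weighted generating functions over marked Laguerre histories of size $n = N+1$: the numerator sums $q^{\tw(H)}$ over $H\in\mathfrak{H}(X(x))$ and the denominator sums over all such $H$ with $r$ marked steps. The just-stated proposition gives a bijection
\[
\Psi\colon \mathfrak{H}(X)\longrightarrow \mathfrak{H}^0(X)
\]
for every admissible label word $X$, which preserves both the label $\etiq$ and the total weight $\tw$.

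First, I would apply $\Psi$ to the numerator of Theorem~\ref{probaChemin}. Since $\etiq(\Psi(\Hg))=\etiq(\Hg)=X(x)$ by~\eqref{labelPsi}, the image of $\mathfrak{H}(X(x))$ under $\Psi$ is exactly $\mathfrak{H}^0(X(x))$. Because $\Psi$ is a bijection and $\tw(\Psi(\Hg))=\tw(\Hg)$ by~\eqref{weightPsi}, the generating polynomial is unchanged:
\[
\mathcal{Z}_{X(x)}(q)=\sum_{\Hg\in\mathfrak{H}(X(x))}q^{\tw(\Hg)}=\sum_{\Hg'\in\mathfrak{H}^0(X(x))}q^{\tw(\Hg')}.
\]

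Next, I would apply the same argument to the denominator. Summing the previous identity over all words $X\in\{A,D,E\}^N$ with exactly $r$ letters $A$, and using that $\Psi$ preserves the number of marked steps (the letters $A$ in the label), we obtain that the polynomial $\mathcal{Z}_{N,r}(q)$ of Theorem~\ref{probaChemin} coincides with the generating series of $q^{\tw}$ over marked large Laguerre histories of size $N$ with $r$ marked steps. This justifies reusing the symbol $Z_{N,r}(q)$ in the statement without ambiguity.

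Substituting these two identities into the formula of Theorem~\ref{probaChemin} yields the desired expression. There is no real obstacle: all of the combinatorial work has already been carried through in the bijection $\Psi$ and its label- and weight-preservation properties, so this corollary is essentially a one-line transport along the bijection once the previous proposition is in hand.
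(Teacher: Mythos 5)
Your argument is exactly the deduction the paper intends: the corollary is stated as an immediate consequence of Theorem~\ref{probaChemin} together with the proposition that $\Psi$ is a label- and weight-preserving bijection, and your transport of both numerator and denominator along $\Psi$ (with the size shift from $N+1$ to $N$ handled correctly) is that same argument spelled out. No gaps; this matches the paper's approach.
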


\subsection{An involution on Laguerre histories}

Our main interest in this section is to describe an involution on
marked (large) Laguerre histories that behaves the same way as
$\invol$ does on the states of the 2-PASEP where $\invol$ is the map
defined in Definition~\ref{def:invol}. This goal is easier to achieve
on marked large Laguerre histories.

To describe this involution we need the notion of \emph{opposing
  steps} of a path: given $H_i=\nearrow$, the opposing step is the
$H_j=\searrow$ such that
$$j=\min_{k>i}\{H_k=\searrow~|~h_k=h_i+1\}.$$
We represented all opposing steps of a path in
Figure~\ref{fig:opposing}. For example, the opposing step of $H_5$ is
$H_9$.

\begin{figure}[ht]
  \centering
  \begin{tikzpicture}

  \draw [thick] (0, 0) -- (1, 1)  node (a0) [midway]{};
  \draw [thick] (1, 1) -- (2, 2) node (a1) [midway]{};
  \draw [thick] (2, 2) -- (3, 1) node (b1) [midway]{};
  \draw [thick, dashed] (3, 1) -- (4, 1) node[midway]{};
  \draw [thick] (4, 1) -- (5, 2) node (a2) [midway]{};
  \draw [thick] (5, 2) -- (6, 3) node (a3) [midway]{};
  \draw [thick] (6, 3) -- (7, 2) node (b3) [midway]{};
  \draw [thick] (7, 2) -- (8, 2) node [midway]{};
  \draw [thick] (8, 2) -- (9, 1) node (b2) [midway]{};
  \draw [thick] (9, 1) -- (10, 0) node (b0) [midway]{};
  
  \draw[dotted] (a0) -- (b0);
  \draw[dotted] (a1) -- (b1);
  \draw[dotted] (a2) -- (b2);
  \draw[dotted] (a3) -- (b3);
\end{tikzpicture}
  \caption{All opposing steps of a path.}
  \label{fig:opposing}
\end{figure}

\begin{algo}
  \label{algo:involLagLarge}
  \begin{itemize}[before = \leavevmode, itemsep=0pt]
  \item Input: a marked large Laguerre history $\Hg$.
  \item Output: a marked large Laguerre history $\widetilde{\Hg}$.
  \item Execution: Let $i_1, \dots, i_r$ be the positions of the
    marked steps of $\Hg$. Apply the following steps:
    \begin{enumerate}
    \item unmark the path;
    \item reverse the path $H$;
    \item exchange the weights of the opposing $\nearrow$ and
      $\searrow$;
    \item mark the path at the positions $n-i_1, \dots, n-i_r$;
    \item change the unmarked $\longrightarrow$ to $\dashrightarrow$
      and reciprocally.
    \end{enumerate}
  \end{itemize}
\end{algo}

We denote by $\invol$ the map associated with
Algorithm~\ref{algo:involLagLarge}. A detailed example of this
transformation is given in Figure~\ref{exinvolLagLarge} with the
different intermediate steps representing which step is applied
between two paths.

\begin{figure}[ht]
  \begin{center}
    \begin{tikzpicture}
  \node (Hg) at (0,0) {\scalebox{0.78}{
      \begin{tikzpicture}
  \draw [thin, gray, dotted] (0, 0) grid (7, 3);
  \draw [thick] (0, 0) -- (1, 1) node[midway,scale=1,above]{$\overline{1}$};
  \draw [thick] (1, 1) -- (2, 1) node[midway,scale=1,above]{};
  \draw [thick] (2, 1) -- (3, 1) node[midway,scale=1,above]{$\overline{2}$};
  \draw [thick] (3, 1) -- (4, 2) node[midway,scale=1,above]{$1$};
  \draw [thick] (4, 2) -- (5, 1) node[midway,scale=1,above]{};
  \draw [thick,dashed] (5, 1) -- (6, 1) node[midway,scale=1,above]{$1$};
  \draw [thick] (6, 1) -- (7, 0) node[midway,scale=1,above]{};
\end{tikzpicture}}
  };
  \node (H) at (6.5,0) {\scalebox{0.78}{
      \begin{tikzpicture}
  \draw [thin, gray, dotted] (0, 0) grid (7, 3);
  \draw [thick] (0, 0) -- (1, 1) node[midway,scale=1,above]{};
  \draw [thick] (1, 1) -- (2, 1) node[midway,scale=1,above]{};
  \draw [thick,dashed] (2, 1) -- (3, 1) node[midway,scale=1,above]{};
  \draw [thick] (3, 1) -- (4, 2) node[midway,scale=1,above]{$1$};
  \draw [thick] (4, 2) -- (5, 1) node[midway,scale=1,above]{};
  \draw [thick,dashed] (5, 1) -- (6, 1) node[midway,scale=1,above]{$1$};
  \draw [thick] (6, 1) -- (7, 0) node[midway,scale=1,above]{};
\end{tikzpicture}}
  };
  \node (Hr) at (6.5,-3.2) {\scalebox{0.78}{
      \begin{tikzpicture}
  \draw [thin, gray, dotted] (0, 0) grid (7, 3);
  \draw [thick] (7, 0) -- (6, 1) node[midway,scale=1,above]{};
  \draw [thick] (6, 1) -- (5, 1) node[midway,scale=1,above]{};
  \draw [thick,dashed] (5, 1) -- (4, 1) node[midway,scale=1,above]{};
  \draw [thick] (4, 1) -- (3, 2) node[midway,scale=1,above]{$1$};
  \draw [thick] (3, 2) -- (2, 1) node[midway,scale=1,above]{};
  \draw [thick,dashed] (2, 1) -- (1, 1) node[midway,scale=1,above]{$1$};
  \draw [thick] (1, 1) -- (0, 0) node[midway,scale=1,above]{};
\end{tikzpicture}}
  };
  \node (Hi) at (6.5,-6.4) {\scalebox{0.78}{
      \begin{tikzpicture}
  \draw [thin, gray, dotted] (0, 0) grid (7, 3);
  \draw [thick] (7, 0) -- (6, 1) node[midway,scale=1,above]{};
  \draw [thick] (6, 1) -- (5, 1) node[midway,scale=1,above]{};
  \draw [thick,dashed] (5, 1) -- (4, 1) node[midway,scale=1,above]{};
  \draw [thick] (4, 1) -- (3, 2) node[midway,scale=1,above]{};
  \draw [thick] (3, 2) -- (2, 1) node[midway,scale=1,above]{$1$};
  \draw [thick,dashed] (2, 1) -- (1, 1) node[midway,scale=1,above]{$1$};
  \draw [thick] (1, 1) -- (0, 0) node[midway,scale=1,above]{};
\end{tikzpicture}}
  };
  \node (Ht1) at (6.5,-9.6) {\scalebox{0.78}{
      \begin{tikzpicture}
  \draw [thin, gray, dotted] (0, 0) grid (7, 3);
  \draw [thick] (0, 0) -- (1, 1) node[midway,scale=1,above]{};
  \draw [thick, dashed] (1, 1) -- (2, 1) node[midway,scale=1,above]{$1$};
  \draw [thick] (2, 1) -- (3, 2) node[midway,scale=1,above]{$1$};
  \draw [thick] (3, 2) -- (4, 1) node[midway,scale=1,above]{};
  \draw [thick] (4, 1) -- (5, 1) node[midway,scale=1,above]{$\overline{2}$};
  \draw [thick] (5, 1) -- (6, 1) node[midway,scale=1,above]{};
  \draw [thick] (6, 1) -- (7, 0) node[midway,scale=1,above]{$\overline{1}$};
\end{tikzpicture}}
  };
  \node (Ht2) at (0,-9.6) {\scalebox{0.78}{
      \begin{tikzpicture}
  \draw [thin, gray, dotted] (0, 0) grid (7, 3);
  \draw [thick] (0, 0) -- (1, 1) node[midway,scale=1,above]{};
  \draw [thick] (1, 1) -- (2, 1) node[midway,scale=1,above]{$1$};
  \draw [thick] (2, 1) -- (3, 2) node[midway,scale=1,above]{$1$};
  \draw [thick] (3, 2) -- (4, 1) node[midway,scale=1,above]{};
  \draw [thick] (4, 1) -- (5, 1) node[midway,scale=1,above]{$\overline{2}$};
  \draw [thick, dashed] (5, 1) -- (6, 1) node[midway,scale=1,above]{};
  \draw [thick] (6, 1) -- (7, 0) node[midway,scale=1,above]{$\overline{1}$};
\end{tikzpicture}}
  };

  \draw[->] (Hg) -- (H) node[midway,scale=1,above]{$(1)$};
  \draw[->] (H) -- (Hr) node[midway,scale=1,right]{$(2)$};
  \draw[->] (Hr) -- (Hi)node[midway,scale=1,right]{$(3)$};
  \draw[->] (Hi) -- (Ht1) node[midway,scale=1,right]{$(4)$};
  \draw[->] (Ht1) -- (Ht2) node[midway,scale=1,above]{$(5)$};
  \draw[->] (Hg) -- (Ht2) node [midway, left] {$\invol$};
\end{tikzpicture}
    \caption{An execution of $\invol$ on
      $\Hg\in\mathfrak{H}^0(ADADEEE)$ to its image
      $\widetilde{\Hg}\in\mathfrak{H}^0(DDDEAEA)$.}
    \label{exinvolLagLarge}
  \end{center}
\end{figure}

\begin{thm}
  The map $\invol$ is an involution on marked large Laguerre
  histories. Moreover, if $\Hg$ is a marked large Laguerre history, we
  have 
  \begin{eqnarray}
    \label{involLabel}
    \invol\Big(\etiq(\Hg)\Big) & = & \etiq(\invol(\Hg));\\
    \label{involWeight}
    \tw(\Hg) & = & \tw(\invol(\Hg)).
  \end{eqnarray}
\end{thm}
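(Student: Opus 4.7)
\noindent\emph{Proof proposal.}
The plan is to verify four things in order: (i) $\invol(\Hg)$ is a valid marked large Laguerre history; (ii) $\invol\circ\invol=\mathrm{id}$; (iii) the label equation~\eqref{involLabel}; and (iv) the weight equation~\eqref{involWeight}. Two structural facts underlie the whole analysis. First, reversal in step~(2) sends a $\nearrow$ at starting height $h$ to a $\searrow$ at starting height $h+1$ and vice versa, while leaving horizontal steps at their height. Second, the non-crossing matching of opposing $\nearrow$/$\searrow$ pairs is preserved under reversal: if $(\nearrow_i,\searrow_j)$ is opposing in $\Hg$, then $(\nearrow_{n+1-j},\searrow_{n+1-i})$ is opposing in the reversed path.

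For~(i) I would fix a generic opposing pair in $\Hg$ and trace it through the five steps under each of the four possible mark configurations on its two steps. The crucial observation is that an opposing pair consists of a $\nearrow$ at height $h$ and a $\searrow$ at height $h+1$, and both of these carry the same mark-weight contribution $h+1$ when marked; this makes the bookkeeping between step~(1) (which should be read as removing the mark contribution from the weight of each marked step) and step~(4) (which adds it back at the reversed positions) close up, so that step~(3)'s weight exchange always lands in the right range. I expect this case analysis to be the main obstacle, since the validity check has to be done uniformly in the four mark configurations and one must also confirm that step~(5) is compatible with the weight ranges after step~(4).

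For~(ii) I would apply the algorithm twice and observe that each step is self-inverse: steps~(2) and~(4) trivially, step~(3) thanks to the structural fact above which makes the opposing-pair structure of the reversed path equal to the reversal of the original one, and step~(5) because step~(4) of the second application restores the original set of marked positions, so the same set of unmarked horizontals is swapped twice.

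For~\eqref{involLabel}, I would track each step's action on labels: $A$-labels travel with their marks to reversed positions via steps~(1) and~(4); an unmarked $\nearrow$ (label $D$) becomes an unmarked $\searrow$ (label $E$) via step~(2), and an unmarked $\longrightarrow$ (label $D$) becomes an unmarked $\dashrightarrow$ (label $E$) via step~(5), with the symmetric statements for the reverse directions. Combined with positional reversal this is exactly the word involution $\invol$ of Definition~\ref{def:invol}. Equation~\eqref{involWeight} is then the easiest: the only step that moves weights between positions is~(3), which permutes weights within opposing pairs, and the mark-weight adjustments in steps~(1) and~(4) cancel pair by pair by the bookkeeping of~(i).
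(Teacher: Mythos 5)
Your plan is correct and follows essentially the same route as the paper: validity (and hence involutivity) via exchanging weights within opposing $\nearrow$/$\searrow$ pairs after reversal, the label identity from reversal swapping $\nearrow$ and $\searrow$ together with step (5) swapping unmarked horizontals (i.e.\ $D\leftrightarrow E$ plus positional reversal, with the $A$'s travelling to reversed positions), and weight preservation from the mark-contribution bookkeeping. The paper's proof makes exactly these points, only phrasing the compensation per step (a marked $\nearrow$ at height $h$ contributing $h+1$ becomes after reversal a $\searrow$ at height $h+1$, again contributing $h+1$, and horizontals keep their height) rather than via the equal contributions of the two members of an opposing pair, which amounts to the same arithmetic.
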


\begin{proof}
  Reversing a Laguerre history exchanges the starting height of a step
  with its ending height. For horizontal steps this doesn't change
  anything. For increasing and decreasing steps, exchanging the weight
  with the opposing step ensure that the weight of every step is
  between $0$ and its starting height after reversing the path. Hence,
  $\invol$ is an involution.

  Reversing the path changes a $\searrow$ step to a $\nearrow$ one and
  conversely. Exchanging $\longrightarrow$ by
  $\dashrightarrow$ and conversely corresponds to exchanging
  $D$ and $E$ in the definition of $\invol$ on words in $\{A, D,
  E\}$. Moreover, the positions of the letter $A$ is just reversed
  which proves~\eqref{involLabel}.

  The steps one, two, and four of Algorithm~\ref{algo:involLagLarge}
  do not change the weight of the path. We only need to prove that
  marking $H$ at a position $n-i_k$ increases the weight as much as it
  is decreased by unmarking $\Hg$ at position $i_k$. If the step is
  $\longrightarrow$ or $\dashrightarrow$, its starting height does not
  change through the different steps. The steps does not change
  either. The increased weight is the same. A step $\nearrow$ is
  sent to $\searrow$ by reversing the path and conversely, this
  increases or decreases the height by one which compensate the fact
  that, starting at height $h$ a step $\nearrow$ increases its weight
  by $h+1$ and a step $\searrow$ by $h$. This
  proves~\eqref{involWeight}.
\end{proof}

The maps $\Psi$ and $\invol$ also induce an involution directly on
marked Laguerre history.

\begin{corollary}
  Let $\Hg$ be a marked Laguerre history and $\LagInvol =
  \Psi^{-1}\circ\invol\circ\Psi$, we have 
  \begin{eqnarray}
    \label{involLabelbis}
    \invol\Big(\etiq(\Hg)\Big) & = & \etiq(\LagInvol(\Hg));\\
    \label{involWeightbis}
    \tw(\Hg) & = & \tw(\LagInvol(\Hg)).
  \end{eqnarray}
\end{corollary}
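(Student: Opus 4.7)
The plan is to observe that this corollary is pure transport of structure along the bijection $\Psi$ from the preceding theorem. The two preservation identities we need were both essentially proved already: the preceding theorem supplies $\invol(\etiq(\Hg')) = \etiq(\invol(\Hg'))$ and $\tw(\Hg') = \tw(\invol(\Hg'))$ for any marked large Laguerre history $\Hg'$, while the proposition about $\Psi$ supplies $\etiq(\Hg) = \etiq(\Psi(\Hg))$ and $\tw(\Hg) = \tw(\Psi(\Hg))$ for any marked Laguerre history $\Hg$ (and hence the same identities for $\Psi^{-1}$ by inverting).

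First I would set $\Hg' = \Psi(\Hg)$, so that by definition $\LagInvol(\Hg) = \Psi^{-1}(\invol(\Hg'))$. For the label identity, I chain the equalities
\begin{equation*}
\invol(\etiq(\Hg)) = \invol(\etiq(\Hg')) = \etiq(\invol(\Hg')) = \etiq(\Psi^{-1}(\invol(\Hg'))) = \etiq(\LagInvol(\Hg)),
\end{equation*}
using successively the label preservation of $\Psi$, the label identity for $\invol$ on large histories, and label preservation of $\Psi^{-1}$. For the weight identity, the same kind of three-step chain gives
\begin{equation*}
\tw(\Hg) = \tw(\Hg') = \tw(\invol(\Hg')) = \tw(\Psi^{-1}(\invol(\Hg'))) = \tw(\LagInvol(\Hg)).
\end{equation*}

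There is no genuine obstacle here: everything is a formal diagram chase once the ingredients are in place. The only point requiring a tiny bit of care is that both identities for $\Psi$ are stated as preservation in the forward direction, so I would explicitly note that $\Psi$ being a bijection means we may apply them to $\Psi^{-1}$ as well (i.e.\ $\etiq(\Psi^{-1}(\Kg)) = \etiq(\Kg)$ and $\tw(\Psi^{-1}(\Kg)) = \tw(\Kg)$ for any marked large Laguerre history $\Kg$, obtained by setting $\Kg = \Psi(\Hg)$). After that remark the two equalities are immediate, and it is worth stating that $\LagInvol$ is itself an involution on marked Laguerre histories, which follows at once because $\invol$ is an involution and $\LagInvol^{2} = \Psi^{-1}\circ\invol^{2}\circ\Psi = \Psi^{-1}\circ\Psi = \operatorname{id}$.
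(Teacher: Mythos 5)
Your proof is correct and is exactly the transport-of-structure argument the paper intends (the paper in fact leaves the corollary without a written proof precisely because it follows by chaining the label/weight preservation of $\Psi$ with the identities for $\invol$ on marked large Laguerre histories). Your remark that the preservation identities transfer to $\Psi^{-1}$, and that $\LagInvol$ is itself an involution, are accurate and harmless additions.
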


\section{Conclusion}
\label{conclusion}

This combinatorial work gives rise to an algebraic interpretation
developed by the second author in \cite{Nunge}.
Thanks to this combinatorial algebra setting Nunge gives an exact
enumeration formula of the stationary
distribution of any state of the 2-ASEP.
This generalizes results on noncommutative symmetric functions
\cite{HNTT,T,NTW}.

\begin{thm}\cite{Nunge}
  Let $x$ be a state of the 2-PASEP and $\Ig$ be the segmented
  composition such that $X(x)=\ade(\Ig)$. We have
  \begin{equation}
    Z_{X(x)}=
    \sum_{\Jg\preceq\Ig}\left(\frac{-1}{q}\right)^{\ell(\Ig)-\ell(\Jg)}
    q^{-\operatorname{st}'(\Ig,\Jg)} c_\Jg,
  \end{equation}
  where $c_\Jg = [s]_q^{j_1}[s-1]_q^{j_2}\dots[1]_q^{j_s}$ with
  $j_1,\dots,j_s$ are the parts of $\Jg$ and $\operatorname{st}'$ is a
  statistic on segmented compositions.
\end{thm}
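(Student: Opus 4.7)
The plan is to prove the formula by induction on the length $N$ of the word $X(x)$, using Proposition~\ref{propRec} as the engine. The base case is precisely~\eqref{propRec_id3}: when $\Ig$ consists only of singletons separated by bars, so that $X(x)=A^s$ for some $s$, the unique $\Jg\preceq\Ig$ is $\Ig$ itself and the right-hand side collapses to $c_\Ig=[s+1]_q!$, in agreement with~\eqref{propRec_id3}. I would first make precise the refinement order $\preceq$ on segmented compositions: $\Jg\preceq\Ig$ should mean that $\Jg$ is obtained from $\Ig$ by turning some letters $D$ of $\ade(\Ig)$ into $E$'s, i.e.\ by enlarging the descent set while leaving the segmentation set intact. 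In this reading, $\ell(\Ig)-\ell(\Jg)$ records how many new descents are introduced, and $\operatorname{st}'(\Ig,\Jg)$ tracks their positions relative to the bars; its precise definition is what adjusts the powers of $q$ correctly in the signed sum.

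For the inductive step, I would peel off letters from the left of $X(x)$ by writing $X(x)=A^s\cdot W$ where $W$ is a word in $\{D,E\}$, and then branch on the first letter of $W$. If it is an $E$, apply~\eqref{propRec_id2} to extract a factor $[s+1]_q$; on the right-hand side this corresponds to recognising that every $\Jg\preceq\Ig$ restricts to a refinement of the shorter composition and that the leading entry of $c_\Jg$ contributes exactly $[s+1]_q$. If the first letter of $W$ is a $D$, use~\eqref{propRec_id1} to rewrite $\mathcal{Z}_{X(x)}$ as $[s+1]_q\mathcal{Z}_{A^s\cdot X}$ plus a sum indexed by factorisations $X=X_1\cdot E\cdot X_2$; the key point is that each such factorisation corresponds, on the level of $\Ig$, to choosing one of the $D$-positions that can be promoted to an $E$, which is precisely what generates the Möbius-style signed sum over $\Jg\preceq\Ig$ after iteration. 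Organising this inductive expansion into a closed formula will produce the sign $(-1/q)^{\ell(\Ig)-\ell(\Jg)}$ because each promotion carries a factor of $-1/q$ from the way~\eqref{propRec_id1} trades a $D$ for an $E$ through the commutation \eqref{ansatzDE}.

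The main obstacle is identifying the statistic $\operatorname{st}'$ that makes the bookkeeping work, and verifying that iterating~\eqref{propRec_id1} produces the correct exponent of $q$ for each pair $(\Ig,\Jg)$. To handle this cleanly I would introduce an auxiliary generating function $\widetilde{Z}_\Ig=\sum_{\Jg\preceq\Ig} q^{\operatorname{st}'(\Ig,\Jg)} \mathcal{Z}_{\ade(\Jg)}$, prove it equals a single product $c_\Ig$ using~\eqref{propRec_id1}--\eqref{propRec_id3}, and then recover the claimed identity by Möbius inversion on the poset of segmented compositions. A more conceptual route, which is presumably what~\cite{Nunge} carries out, is algebraic: one interprets $X\mapsto\mathcal{Z}_X$ as the evaluation of a basis element of $\SCQSym$ by the Matrix Ansatz, and the theorem becomes the change-of-basis formula between the natural ``monomial-type'' basis indexed by $\Ig$ and a multiplicative basis whose structure constants are precisely the $c_\Jg$; the statistic $\operatorname{st}'$ then has a clean reading as a degree in the coproduct. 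I expect the algebraic route to give the conceptual meaning of $\operatorname{st}'$, while the inductive route gives the most direct verification.
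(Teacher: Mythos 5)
There is an important mismatch of expectations here: the paper does not prove this theorem at all — it is quoted from \cite{Nunge}, where it is obtained algebraically (a change-of-basis computation in an algebra of segmented-composition analogues of noncommutative symmetric functions, which is where $\preceq$ and $\operatorname{st}'$ are actually defined). So your proposal cannot be "the same approach as the paper"; it has to stand on its own, and as it stands it is a programme rather than a proof. The decisive content of the theorem is precisely the order $\preceq$ and the statistic $\operatorname{st}'$, and you leave both undetermined, saying their identification is "the main obstacle". Until they are pinned down and the inductive bookkeeping is verified, nothing is proved: the base case $X=A^s$ and the general shape of a signed sum are far too weak to force the formula.

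Two concrete points suggest your plan would not go through as written. First, your guessed reading of $\preceq$ (that $\Jg$ is obtained from $\Ig$ by turning $D$'s of $\ade(\Ig)$ into $E$'s, i.e.\ by adding descents) makes $\Jg$ \emph{finer} than $\Ig$, so $\ell(\Jg)\geq\ell(\Ig)$ and the exponent $\ell(\Ig)-\ell(\Jg)$ in $(-1/q)^{\ell(\Ig)-\ell(\Jg)}$ would be nonpositive in a nontrivial way only if the sum runs over coarsenings; the natural reading of the statement (and of the analogous formulas in \cite{HNTT}) is that $\Jg$ ranges over coarsenings of $\Ig$, the opposite of what you propose. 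Second, you misread the recurrence: in \eqref{propRec_id1} the sum is over factorizations $X=X_1\cdot E\cdot X_2$ and the corresponding term is $q^{\kappa(X_1)}\mathcal{Z}_{A^s\cdot X_1\cdot D\cdot X_2}$, i.e.\ an occurrence of $E$ is replaced by a $D$ with a \emph{positive} coefficient $q^{\kappa(X_1)}$ — no $D$ is "promoted to an $E$", and no factor $-1/q$ appears anywhere in Proposition~\ref{propRec} or in \eqref{ansatzDE}. The signs in the theorem can therefore only arise after inverting (or telescoping) the recurrence, which is exactly the step your sketch does not carry out; your fallback of "Möbius inversion on the poset of segmented compositions" presupposes the very definition of $\preceq$ and of $\operatorname{st}'$ that is missing. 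Your final remark — that the conceptual proof interprets $X\mapsto\mathcal{Z}_X$ through a basis of $\SCQSym$ and reads the theorem as a change of basis — is indeed the route taken in \cite{Nunge}, but invoking it does not repair the inductive argument you actually propose.
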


The 2-PASEP has five parameters in its most general form \cite{uchiyama}
$\alpha,\beta,\gamma,\delta$
and $q$. In this work we set $\alpha=\beta=1$ and $\gamma=\delta=0$.
We could also set the probabilities equal to $1/(N+1)$ to be equal to $u/(N+1)$
and then choose for example set $q>1$ and adjust the other parameters. This is an interesting
problem suggested by the referee. 
Some of our results
could probably be generalized in the case for $\alpha$ and $\beta$
general.
The work of Josuat-Verg\`es in the case of the classical PASEP \cite{JV}
gives an interpretation
for the Laguerre histories and the permutations. We leave as an open
problem to generalize this
to marked Laguerre histories and partially signed permutations.

Inspired by the Markov chains defined on permutations (or permutation
tableaux)
that project on the PASEP \cite{CW1}, we would like to define an
analog on the
partially signed permutations or on marked Laguerre histories. We
conjecture that it is
possible to
define such a Markov chain on partially signed permutations with $r$
signs that will project to the 2-PASEP.
Ideally the  graph would be composed of $(r + 1)!$ components. Each
component would
project to the Markov chain of 2-PASEP.

A natural generalization of the 2-PASEP is due to Cantini \cite{can}.
Given $\ell$ a positive integer,
the state of the process is all the words of length $N$ on the alphabet
$\{-\ell,\ldots ,\ell\}$.
Then particles $ij$ can become $ji$ with rate $1$ if $i>j$ and with rate
$q$ otherwise.
At the left and right border a particle $i$ can become $-i$ with a
certain rate. The 2-PASEP is
the case $\ell=1$. The matrix ansatz does not hold in this general
setting but Cantini shows that
the partition function is a specialization of a Koornwinder polynomial
\cite{can}.
We leave as an open problem to generalize partially signed permutations
to this model.

\printbibliography

\end{document}